\numberwithin{equation}{section}
\theoremstyle{plain}
\newtheorem{theorem}{Theorem}[section]
\newtheorem{lemma}[theorem]{Lemma}
\newtheorem{corollary}[theorem]{Corollary}
  \theoremstyle{remark}
\newtheorem{remark}[theorem]{Remark}
  \theoremstyle{definition}
\def\R{\mathbb{R}}
\def\Z{\mathbb{Z}}
\begin{document}
\subjclass[2010]{35J92, 35A24, 35B05, 35B09.}

\keywords{Quasilinear elliptic equations, Shooting method, Sobolev-supercritical nonlinearities, Neumann boundary conditions.}

\title[]{%Multiplicity of solutions for a $p$-Laplacian supercritical Neumann problem \\
% Multiple positive solutions for a $p$-Laplacian Neumann problem without growth conditions\\
Multiple positive solutions for a class of $p$-Laplacian Neumann problems without growth conditions}

\author[A. Boscaggin]{Alberto Boscaggin}
\address{Alberto Boscaggin\newline\indent 
Dipartimento di Matematica
\newline\indent
Università di Torino
\newline\indent
via Carlo Alberto 10, 10123 Torinio, Italia}
\email{alberto.boscaggin@unito.it}

\author[F. Colasuonno]{Francesca Colasuonno}
\address{Francesca Colasuonno\newline\indent
D\'epartement de Math\'ematique\newline\indent
Universit\'e Libre de Bruxelles
\newline\indent
Campus de la Plaine - CP214\newline\indent
boulevard du Triomphe - 1050 Bruxelles, Belgique
}
\email{francesca.colasuonno@unibo.it}

\author[B. Noris]{Benedetta Noris}
\address{Benedetta Noris
\newline \indent Laboratoire Ami\'enois de Math\'ematique Fondamentale et Appliqu\'ee\newline\indent
Universit\'e de Picardie Jules Verne\newline\indent
33 rue Saint- Leu, 80039 AMIENS, France}
\email{benedetta.noris@u-picardie.fr}

\date{\today}

\begin{abstract} 
For $1<p<\infty$, we consider the following problem
$$
-\Delta_p u=f(u),\quad u>0\text{ in }\Omega,\quad\partial_\nu u=0\text{ on }\partial\Omega,
$$
where $\Omega\subset\mathbb R^N$ is either a ball or an annulus. The nonlinearity $f$ is possibly supercritical in the sense of Sobolev embeddings; in particular our assumptions allow to include the prototype nonlinearity $f(s)=-s^{p-1}+s^{q-1}$ for every $q>p$.
We use the shooting method to get existence and multiplicity of non-constant radial solutions.  
%For the prototype nonlinearity $f(u)=u^{q-1}-u^{p-1}$, $q>p$, our result reads as follows: if $p>2$ there exist infinitely many radial solutions; if $p=2$ and $q-2>\lambda_{k+1}^{\text{rad}}$ ($\lambda_{k+1}^{\text{rad}}$ being the $(k+1)$-th radial eigenvalue of the Neumann $p$-Laplacian), there exist at least $k$ non-constant radial solutions; if $1<p<2$ and the diameter of the domain is greater than a certain $R_*(k)>0$, there exist at least $2k$ non-constant radial solutions.
With the same technique, we also detect the oscillatory behavior of the solutions around the constant solution $u\equiv1$. 
In particular, we prove a conjecture proposed in [D. Bonheure, B. Noris, T. Weth, {\it Ann. Inst. H. Poincar\'e Anal. Non Lin\'aire} vol. 29, pp. 573-588 (2012)], that is to say, if $p=2$ and $f'(1)>\lambda_{k+1}^{\textnormal{rad}}$, there exists a radial solution of the problem having exactly $k$ intersections with $u\equiv1$ for a large class of nonlinearities.
\end{abstract}

\maketitle

\section{Introduction}
\subsection{Assumptions and main results}
The aim of this paper is to investigate the existence of solutions of the following $p$-Laplacian Neumann problem
\begin{equation}\label{main}
\left\{
\begin{array}{ll}
\vspace{0.1cm}
-\Delta_p u = f(u) & \mbox{ in } \Omega \\
\vspace{0.1cm}
u > 0 & \mbox{ in } \Omega \\
\partial_\nu u = 0 & \mbox{ on } \partial\Omega, \\
\end{array}
\right.
\end{equation}
where $1<p<\infty$, $\nu$ is the outer unit normal of $\partial \Omega$, and we require very mild assumptions on $f$, which allow in particular to consider 
\begin{equation}\label{eq:prototype}
f(s)=-s^{p-1}+s^{q-1} \quad\text{ for every }q>p
\end{equation}
as a prototype nonlinearity.
Our main purpose is not to impose any growth conditions on $f(s)$ as $s\to\infty$, so that $f$ may have a supercritical behavior with respect to the critical Sobolev exponent (that is to say, $N>p$ and $q>Np/(N-p)$ in the prototype nonlinearity \eqref{eq:prototype}).

We work in a radial domain $\Omega\subset\mathbb R^N$, $N\geq1$, which is either an annulus 
$$
\Omega=\mathcal A(R_1,R_2):=\{ x \in \mathbb{R}^N \, : \, R_1 < \vert x \vert < R_2 \},\quad 0<R_1<R_2<\infty,
$$
or a ball
$$
\Omega=\mathcal B(R_2):=\{ x \in \mathbb{R}^N \, : \vert x \vert < R_2 \},\quad 0=R_1<R_2<\infty,
$$
and we look for radial solutions of \eqref{main}. Throughout the paper, with abuse of notation, we denote $u(r):=u(x)$ for all $|x|=r$

We assume that $f$ satisfies the following conditions:
\begin{itemize}
\item[$(f_\textrm{reg})$] $f \in \mathcal{C}([0,\infty)) \cap \mathcal{C}^1((0,\infty))$;
\item[$(f_\textrm{eq})$] $f(0) = f(1) = 0$, $f(s) < 0$ for $0 < s < 1$ and $f(s) > 0$ for $s > 1$; %\textcolor{red}{In realta' $f(0)=0$ e' una conseguenza di $(f_0)$, visto che $C_0<+\infty$.} 
\item[$(f_0)$] there exists $C_0 \in [0,\infty)$ such that $\lim_{s\to0^+} \frac{f(s)}{s^{p-1}}  =- C_0$;
\item[$(f_1)$] there exists $C_1 \in [0,\infty]$ such that $\lim_{s\to1} \frac{f(s)}{\vert s - 1 \vert^{p-2} (s-1)} = C_1$.
\end{itemize}
We remark that the choice of the constant $1$ in $(f_\textrm{eq})$ is arbitrary, this constant could be replaced by any $s_1\in (0,\infty)$, thus changing accordingly $(f_1)$. We also stress that we do not impose at infinity any of the conditions frequently used in the literature, such as the Ambrosetti-Rabinowitz one.

\begin{remark}\label{remhp}
Let us notice that the assumptions $(f_\textrm{reg})$ and $(f_1)$ are not independent; indeed, the differentiability of $f$  at $s=1$ implies $C_1 \in [0,\infty)$ if $p=2$ and $C_1 = 0$ if $1<p<2$. We believe that this regularity condition can be removed by an approximation argument; however, since it is satisfied for the model nonlinearity \eqref{eq:prototype}, we have preferred to avoid this technical step. We also observe that, in view of $(f_\textrm{eq})$, the ratio $f(s)/s^{p-1}$ appearing in hypothesis $(f_0)$ is negative for $s \to 0^+$, so that $\limsup_{s \to 0^+}f(s)/s^{p-1}\leq 0$; actually, 
a careful inspection of the proofs shows that, in all the results below, $(f_0)$ could be replaced by the weaker assumption
$\liminf_{s \to 0^+} f(s) / s^{p-1} > -\infty$, that is to say, $f(s)/s^{p-1}$ is bounded in a right neighborhood of $s = 0$.
%and $(f_1)$ into $\liminf_{s \to 1}\frac{f(s)}{\vert s - 1 \vert^{p-2} (s-1)}\geq C_1$ whenever $C_1 > 0$.
\end{remark}

In order to state our main results, let us introduce $\lambda_{k}^{\textnormal{rad}}$ as the $k$-th radial eigenvalue of $-\Delta_p u = \lambda|u|^{p-2}u$ in $\Omega$ with Neumann boundary conditions, i.e. $0=\lambda_1^{\textnormal{rad}}<\lambda_2^{\textnormal{rad}}<\lambda_3^{\textnormal{rad}}<\dots$, cf. Section \ref{A} for further details. In case the constant $C_1$ appearing in assumption $(f_1)$ is positive, we have the following existence and multiplicity result.

\begin{theorem}\label{th:main}
Let $\Omega$ be either the annulus $\mathcal A(R_1,R_2)$ or the ball $\mathcal B(R_2)$ and let $f$ satisfy $(f_{\mathrm{reg}})$-$(f_1)$. 

Assume that $C_1 > \lambda_{k+1}^{\textnormal{rad}}$ for some integer $k \geq 1$. Then, there exist at least $k$ non-constant radial solutions $u_1,\ldots,u_k$ to \eqref{main}. Moreover, $u_j(r)-1$ has exactly $j$ zeros for $r\in(R_1,R_2)$, for every $j=1,\ldots,k$.

In particular, if $C_1 = +\infty$, then \eqref{main} has infinitely many non-constant radial solutions.
\end{theorem}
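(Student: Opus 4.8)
The plan is to recast the radial Neumann problem as a shooting problem in the ODE
$$
(r^{N-1}|u'|^{p-2}u')' + r^{N-1}f(u) = 0,\qquad r\in(R_1,R_2),\qquad u'(R_1)=u'(R_2)=0,
$$
and to follow, as the initial datum $d:=u(R_1)$ varies, the trajectory $r\mapsto(u(r;d),u'(r;d))$ in the phase plane. The key structural feature is that $(f_{\mathrm{eq}})$ makes $u\equiv 1$ a center-type equilibrium: the autonomous energy $E(u,v)=\frac{p-1}{p}|v|^p+F(u)$, where $F(s)=\int_1^s f$, has a strict minimum at $(1,0)$ and $F(u)\to +\infty$ as $u\to\infty$ is not needed — only that $F$ is positive and strictly monotone on each side of $1$ and that orbits starting near $(1,0)$ stay in the region $u>0$. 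Since the equation is non-autonomous (the weight $r^{N-1}$), energy is not conserved, but one computes $\frac{d}{dr}E(u,u') = -\frac{N-1}{r}|u'|^p\le 0$, so $E$ is non-increasing along trajectories; this monotonicity is what forces genuine rotation rather than escape, and it plays the role of the Pohozaev-type control that replaces growth hypotheses.

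Next I would set up a rotation-number (Prüfer-type) coordinate around $(1,0)$, adapted to the $p$-Laplacian: write $u-1=\rho^{2/p}\,\mathrm{Cs}_p(\theta)$-type generalized polar coordinates (or, more robustly, just count the zeros of $u(\cdot;d)-1$ and the sign of $u'$ between consecutive zeros), and define an angular function $\theta(r;d)$ that increases precisely when the trajectory winds around the equilibrium. The three ingredients to establish are: (i) for $d$ close to $1$, the linearization governed by $C_1=f'(1)$ (in the $p$-Laplacian sense of $(f_1)$) forces the trajectory to rotate at least $\lceil$ roughly $\rceil$ as fast as the eigenfunction of $\lambda_{k+1}^{\mathrm{rad}}$, so that $\theta(R_2;d)-\theta(R_1;d)$ exceeds the angle corresponding to $k$ half-turns — this is exactly where the hypothesis $C_1>\lambda_{k+1}^{\mathrm{rad}}$ enters, via a Sturm comparison between the variational equation along $u\equiv1$ and the eigenvalue equation; (ii) for $d$ close to $0^+$ (or $d$ large, on the other side), the trajectory makes less than one half-turn — here $(f_0)$, giving $f(s)/s^{p-1}$ bounded near $0$, bounds the rotation near the degenerate boundary $u=0$, while the energy monotonicity prevents the solution from leaving $(0,\infty)$ or oscillating wildly; (iii) $\theta(R_2;d)$ depends continuously on $d$ on the relevant interval, which requires continuous dependence for the $p$-Laplacian ODE away from $u=0$ and a separate argument (again via $E$ non-increasing and $(f_0)$) that solutions do not reach $u=0$ in finite "time" $r<R_2$ for $d$ in a neighborhood of the shooting range.

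With (i)--(iii) in hand, an intermediate-value argument on the continuous function $d\mapsto\theta(R_2;d)$ yields, for each $j=1,\dots,k$, a value $d_j$ at which the trajectory has performed exactly $j$ half-turns and satisfies $u'(R_2;d_j)=0$; the Neumann condition at $R_1$ holds by construction of the shooting. Counting zeros: a trajectory that has rotated through $j$ half-turns about $(1,0)$ has $u(\cdot;d_j)-1$ vanishing exactly $j$ times in $(R_1,R_2)$, giving the stated solutions $u_1,\dots,u_k$, which are non-constant and pairwise distinct since they have different zero counts. Finally, if $C_1=+\infty$ then the hypothesis holds for every $k$, producing infinitely many such solutions.

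The main obstacle I expect is step (iii) combined with the lower bound in (i): controlling the $p$-Laplacian shooting map near the singular set $\{u=0\}$ (where uniqueness/continuous dependence for $-\Delta_p$ is delicate) and making the Sturm-type comparison quantitative enough to convert $C_1>\lambda_{k+1}^{\mathrm{rad}}$ into "at least $k+1$ half-turns near $d=1$" — in the quasilinear case $p\ne 2$ one cannot use the linear variational equation directly, so the comparison must be carried out on a suitably modified (generalized-trigonometric) Prüfer angle, and one must check that the non-autonomous damping term $-\frac{N-1}{r}|u'|^p$ does not destroy the needed inequality. The case $p<2$, where $(f_1)$ forces $C_1=0$, is correctly excluded from this theorem and handled separately.
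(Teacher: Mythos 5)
Your proposal follows essentially the same route as the paper: shooting from $u(R_1)$ near the constant $1$, generalized $p$-polar (Pr\"ufer) coordinates around $(1,0)$, a Sturm-type comparison of the angular equation with that of the Neumann eigenvalue problem to get more than $(k+1)$ half-turns near the constant, the energy $\tfrac{|u'|^p}{p'}+F(u)$ for uniqueness/continuability, and an intermediate-value argument in $d$ with zero-counting via monotonicity of the angle. The one difficulty you flag (the damping term spoiling the comparison) does not in fact arise, since the damping drops out of the $\theta'$ equation, and the uniform smallness of $\rho_d$ needed to apply $(f_1)$ on all of $[R_1,R_2]$ follows from continuous dependence; otherwise your outline matches the paper's proof step for step.
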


Noting that in the case $p=2$ we have $C_1=f'(1)$, Theorem \ref{th:main}
shows that the conjecture proposed in \cite{BNW} holds true, that is to say, if $f'(1)>\lambda_{k+1}^{\textnormal{rad}}$ for some integer $k\geq1$, there exists a radial solution of \eqref{main} having exactly $k$ intersections with the constant solution $u\equiv1$. More precisely, taking into account also Remark \ref{remhp}, we can state the following general result.

\begin{corollary}\label{conjecturesolved}
Let $\Omega$ be either the annulus $\mathcal A(R_1,R_2)$ or the ball $\mathcal B(R_2)$ and let $f$ satisfy $(f_{\mathrm{reg}})$-$(f_\mathrm{eq})$. If $f(s)/s$ is bounded in a right neighborhood of $s=0$ and $f'(1) >\lambda_{k+1}^{\textnormal{rad}}$, then there exist a radial solution $u$ of 
$$
\left\{
\begin{array}{ll}
\vspace{0.1cm}
-\Delta u = f(u) & \mbox{ in } \Omega \\
\vspace{0.1cm}
u > 0 & \mbox{ in } \Omega \\
\partial_\nu u = 0 & \mbox{ on } \partial\Omega, \\
\end{array}
\right.
$$
such that $u(r)-1$ has exacly $k$ zeros for $r \in (R_1,R_2)$.
\end{corollary}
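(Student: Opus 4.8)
The plan is to obtain Corollary~\ref{conjecturesolved} as a direct specialization of Theorem~\ref{th:main} to the case $p=2$, once one checks that the hypotheses of that theorem are met, in the mildly weakened form pointed out in Remark~\ref{remhp}. No new analysis is needed; the whole content is a verification of assumptions followed by an invocation of Theorem~\ref{th:main}.

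First I would note that for $p=2$ the operator $-\Delta_p$ is the ordinary Laplacian, so the equation in the corollary coincides with \eqref{main}, and the assumptions $(f_{\mathrm{reg}})$ and $(f_\mathrm{eq})$ are literally those required by Theorem~\ref{th:main}. Next I would recover $(f_1)$: for $p=2$ the defining limit reads $\lim_{s\to1} f(s)/(s-1)$, which, since $f(1)=0$ and $f\in\mathcal{C}^1$ near $s=1$, equals $f'(1)$; hence $(f_1)$ holds with $C_1=f'(1)$, and the sign conditions in $(f_\mathrm{eq})$ force $C_1=f'(1)\ge 0$, so $C_1\in[0,\infty)$ as demanded. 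For $(f_0)$ I would appeal to the observation recorded in Remark~\ref{remhp}, namely that in all the results of the paper $(f_0)$ may be replaced by the weaker requirement that $f(s)/s^{p-1}$ be bounded in a right neighborhood of $s=0$; for $p=2$ this is exactly the hypothesis ``$f(s)/s$ bounded in a right neighborhood of $s=0$'' assumed in the corollary.

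With all the hypotheses in place, the condition $f'(1)>\lambda_{k+1}^{\mathrm{rad}}$ becomes $C_1>\lambda_{k+1}^{\mathrm{rad}}$ (and in particular $C_1>0$, since $\lambda_{k+1}^{\mathrm{rad}}\ge\lambda_2^{\mathrm{rad}}>0$), so Theorem~\ref{th:main} applies and produces non-constant radial solutions $u_1,\dots,u_k$ of \eqref{main} with the property that $u_j(r)-1$ has exactly $j$ zeros in $(R_1,R_2)$. Taking $u:=u_k$ yields the desired solution with exactly $k$ zeros of $u(r)-1$, which is the assertion of the corollary.

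I do not expect a genuine obstacle here, since the argument is a pure specialization. The only point requiring care is the bookkeeping around $(f_0)$: one must be sure that the estimates underlying Theorem~\ref{th:main} truly use only the one-sided boundedness $\liminf_{s\to0^+}f(s)/s^{p-1}>-\infty$, rather than the existence of the limit $-C_0$. This is precisely what Remark~\ref{remhp} asserts, so in practice this step reduces to citing that remark; alternatively, for a self-contained presentation one would re-run the relevant lemmas with $\liminf_{s\to0^+}f(s)/s^{p-1}>-\infty$ in place of $(f_0)$, which changes nothing in the proofs.
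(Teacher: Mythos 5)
Your proposal is correct and coincides with the paper's own argument: the corollary is obtained exactly by specializing Theorem~\ref{th:main} to $p=2$, identifying $C_1=f'(1)$ via $(f_{\mathrm{reg}})$, and invoking Remark~\ref{remhp} to replace $(f_0)$ by the boundedness of $f(s)/s$ near $s=0$. No further comment is needed.
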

 
When $C_1=0$, a different behavior appears. First, the existence of non-constant solutions of \eqref{main} depends on the diameter of the domain. Secondly, if the diameter is sufficiently large, there exist now \textit{two} solutions having the same oscillatory behavior, in the sense specified in the following theorem.

\begin{theorem}\label{th:main2}
Let $f$ satisfy $(f_{\mathrm{reg}})$-$(f_1)$ with $C_1 = 0$.
\begin{itemize}
\item[(i)] For any integer $k \geq 1$ there exists $R_*(k) > 0$ such that if $R_2 > R_*(k)$, then problem \eqref{main} in $\Omega=\mathcal B(R_2)$ has at least $2k$ non-constant radial solutions. 
\item[(ii)] For any integer $k \geq 1$ and any $\varepsilon > 0$ there exists $R_*(k,\varepsilon) > 0$ such that if $R_1 < \varepsilon R_2$ and $R_2 > R_*(k,\varepsilon)$, then problem \eqref{main} in $\Omega=\mathcal A(R_1,R_2)$ has at least $2k$ non-constant radial solutions. 
\end{itemize}
Denoting these solutions by $u_1^+,\ldots,u_{k}^+$, $u_1^-,\ldots,u_{k}^-$, we have that each $u_j^\pm(r)-1$ has exactly $j$ zeros for $r\in(R_1,R_2)$, for every $j=1,\ldots,k$.
\end{theorem}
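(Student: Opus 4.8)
The plan is to set up the shooting method for the radial ODE associated with \eqref{main}, exactly as in the proof of Theorem \ref{th:main}, and then track how the solution depends on the shooting parameter when $C_1 = 0$. Writing $u = u(r)$, the equation $-\Delta_p u = f(u)$ becomes
\begin{equation*}
-\bigl(r^{N-1}\varphi_p(u')\bigr)' = r^{N-1} f(u), \qquad \varphi_p(s) = |s|^{p-2}s,
\end{equation*}
on $(R_1,R_2)$, with $u'(R_1) = u'(R_2) = 0$ (and, in the ball case, $u'(0)=0$ automatically from regularity at the center). For $d > 0$ let $u(\cdot\,;d)$ denote the solution of the initial value problem with $u(R_1) = d$, $u'(R_1) = 0$ (resp. $u(0)=d$, $u'(0)=0$ in the ball). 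The strategy is: (1) show that for $d$ in a suitable range the solution stays positive and is defined on all of $[R_1,R_2]$; (2) introduce a continuous ``rotation number'' counting how many times $u(\cdot\,;d)-1$ vanishes on $(R_1,R_2)$, via a phase-plane/Prüfer-type transformation in the $(u-1,\,r^{N-1}\varphi_p(u'))$ variables; (3) prove that as $d$ ranges over the admissible interval, this count takes all values $0,1,\dots,k$, and that a solution of the Neumann problem corresponds precisely to a zero of the terminal velocity $u'(R_2;d)$; (4) the new feature for $C_1 = 0$ is that \emph{two} distinct values of $d$ on opposite sides of $d=1$ (one with $d<1$, one with $d>1$) produce the same oscillation count $j$, yielding $u_j^-$ and $u_j^+$.

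The key steps, in order. First I would establish the a priori bounds and global existence on $[R_1,R_2]$: using $(f_{\mathrm{eq}})$ and an energy/Lyapunov function $E(r) = \frac{p-1}{p}|u'|^p + F(u)$ with $F' = f$, one shows $0 < u < $ (something controlled by $d$) and that $u$ cannot blow up or hit $0$ before $r = R_2$, at least for $d$ not too large; this is where the absence of growth conditions is handled by the shooting framework rather than by variational compactness. Second, I would linearize near the constant solution $u \equiv 1$: when $C_1 = 0$, the linearization of $f$ at $1$ is degenerate, so the oscillation of $u(\cdot\,;d)-1$ for $d$ close to $1$ is \emph{slow} — the number of zeros on $(R_1,R_2)$ is bounded as $d \to 1$. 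This is the crucial contrast with the case $C_1 > \lambda_{k+1}^{\mathrm{rad}}$ in Theorem \ref{th:main}, where proximity to $1$ already forces many oscillations. Third, I would show that by taking $R_2$ large (and, in the annulus, $R_1 < \varepsilon R_2$ so that the domain is ``fat'' and the rotation accumulates), the oscillation count of $u(\cdot\,;d)$ can be made $\geq k$ for $d$ suitably far from $1$ on \emph{each} side; here the largeness threshold $R_*(k)$, resp. $R_*(k,\varepsilon)$, enters. Fourth, a continuity/intermediate-value argument on the rotation number along $d \mapsto u(\cdot\,;d)$ restricted to $(0,1)$ and to $(1,\infty)$ separately produces, for each $j = 1,\dots,k$, a parameter $d_j^- \in (0,1)$ and $d_j^+ \in (1,\infty)$ at which $u'(R_2;d) = 0$ and the zero count is exactly $j$; monotonicity or a careful bookkeeping of the count rules out the constant solution and guarantees the $u_j^\pm$ are pairwise distinct (different $j$ give different numbers of zeros; same $j$ but opposite sides of $1$ are distinct because one starts below $1$ and the other above).

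The main obstacle I expect is Step 3–4: controlling the rotation number globally as a function of $d$ when no growth restriction is placed on $f$ at infinity. One must ensure that the ``number of zeros of $u(\cdot\,;d)-1$'' is a well-defined, integer-valued, and suitably monotone (or at least continuously varying and surjective onto $\{0,\dots,k\}$) function of $d$, \emph{and} simultaneously that the solution exists on the whole interval for the relevant $d$'s — the two requirements pull against each other since large $d$ is exactly where loss of global existence threatens. The device to reconcile them is presumably to show that once $u(\cdot\,;d)$ has performed enough oscillations around $1$, its energy is controlled and it stays in a compact region, so the shooting is safe; quantifying ``enough'' in terms of $k$ and the geometry of $\Omega$ is the technical heart. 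The $C_1 = 0$ hypothesis is used precisely to decouple the two sides $d < 1$ and $d > 1$: the degeneracy at $1$ means the branch emanating from $d=1$ contributes zero oscillations, so each prescribed count $j$ must be realized twice, once per side, which is what delivers the factor $2k$ in the conclusion.
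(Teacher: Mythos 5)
Your overall shooting framework is the right one, and you correctly identify the role of $C_1=0$ (slow rotation of $\theta_d$ near the constant solution) and of the largeness of $R_2$ (forcing rotation away from it). However, the mechanism you propose for the factor $2$ is not the paper's and, more importantly, does not work under the stated hypotheses. You obtain $u_j^-$ and $u_j^+$ by shooting from initial data on \emph{opposite sides} of $u\equiv 1$ (one with $u(R_1)<1$, one with $u(R_1)>1$) and running an intermediate-value argument separately on $(0,1)$ and on $(1,\infty)$. The second half of this has no anchor: since no growth condition is imposed on $f$ at infinity, there is no control on the rotation number (nor, a priori, on positivity or on the zero count) as the initial datum above $1$ becomes large, so the intermediate-value argument on $(1,\infty)$ cannot be closed. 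Indeed the paper explicitly observes that \emph{all} solutions it constructs satisfy $u(0)<1$, and lists the existence of solutions with $u(0)>1$ as an open problem (Section \ref{sec:simulations}). The actual doubling mechanism is different: all shooting data lie in $(0,1)$, i.e.\ $u(R_1)=1-d$ with $d\in(0,1)$; the map $d\mapsto\theta_d(R_2)$ equals $\pi_p$ at $d=1$ and, crucially, tends to $\pi_p$ as $d\to 0^+$ as well, because $C_1=0$ forces $\theta_d\to\vartheta_0\equiv\pi_p$ (Remark \ref{conv}). If one exhibits a single interior value $d_k$ with $\theta_{d_k}(R_2)>(k+1)\pi_p$, then each level $(j+1)\pi_p$, $j=1,\dots,k$, is crossed at least twice --- once on each side of $d_k$ --- which is where the $2k$ comes from.

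The remaining technical heart, which you only gesture at, is producing that interior $d_k$. Your suggestion (``once $u$ has oscillated enough its energy is controlled'') has the logic reversed: one cannot first assume the oscillations and then deduce confinement. The paper instead \emph{chooses} a phase-plane radius $\rho_k^*$ and nested radii $\check\rho_k<\rho_k^*<\hat\rho_k$ so that, by the comparison curves $\rho_\pm(\theta)$ solving \eqref{eqm} and Lemma \ref{lem:BosZan}, the trajectory with $\rho_{d_k}(\varepsilon R_2)=\rho_k^*$ is trapped in the annulus $\{\check\rho_k\le\rho\le\hat\rho_k\}$ for as long as $\theta$ has advanced by less than $k\pi_p$; on that compact annulus the angular speed has a positive lower bound $\delta_k^*$, so $R_2>k\pi_p/((1-\varepsilon)\delta_k^*)$ guarantees the required rotation. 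Without some such confinement device, the tension you yourself point out --- global existence versus rotation control in the absence of growth conditions --- is not resolved by your outline.
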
 

Noting that the prototype nonlinearity \eqref{eq:prototype} satisfies the assumptions $(f_{\mathrm{reg}})$-$(f_1)$ with
$$
C_0=1, \qquad 
C_1=\left\{\begin{array}{ll}
0 & \text{ if } 1<p<2, \\
q-2 & \text{ if } p=2, \\
+\infty & \text{ if } p>2, \\
\end{array}\right
.
$$
we have the following corollary of Theorems \ref{th:main} and \ref{th:main2}.
 
\begin{corollary}\label{cor:modello}
Let $\Omega$ be either the annulus $\mathcal A(R_1,R_2)$ or the ball $\mathcal B(R_2)$, and consider the Neumann problem
\begin{equation}\label{modello}
\left\{
\begin{array}{ll}
\vspace{0.1cm}
-\Delta_p u + u^{p-1} = u^{q-1}  & \mbox{ in } \Omega, \\
\vspace{0.1cm}
u > 0 & \mbox{ in } \Omega, \\
\partial_\nu u = 0 & \mbox{ on } \partial\Omega, \\
\end{array}
\right.
\end{equation}
with $q > p$. Then:
\begin{itemize}
\item[(i)] for $p > 2$, \eqref{modello} has infinitely many non-constant radial solutions;
\item[(ii)] for $p = 2$ and $q-2 > \lambda_{k+1}^{\textnormal{rad}}$ for some $k\ge1$, \eqref{modello} has at least $k$ non-constant radial solutions;
\item[(iii)] for $1 < p < 2$, for any integer $k \geq 1$ and any $\varepsilon > 0$ there exists $R_*(k,\varepsilon) > 0$ such that if $R_1 < \varepsilon R_2$ and $R_2 > R_*(k,\varepsilon)$, then problem \eqref{modello} in $\Omega=\mathcal A(R_1,R_2)$ has at least $2k$ non-constant radial solutions. Analogously in the case $\Omega=\mathcal B(R_2)$.
\end{itemize}
\end{corollary}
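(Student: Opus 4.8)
The plan is simply to recognize \eqref{modello} as the instance of \eqref{main} corresponding to $f(s) = -s^{p-1} + s^{q-1}$ — rewriting $-\Delta_p u + u^{p-1} = u^{q-1}$ as $-\Delta_p u = f(u)$ — to verify that this particular $f$ satisfies $(f_{\mathrm{reg}})$--$(f_1)$, and then to apply Theorem \ref{th:main} or Theorem \ref{th:main2} according to the value of $C_1$.

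First I would check $(f_{\mathrm{reg}})$ and $(f_{\mathrm{eq}})$: since $q > p > 1$, the map $s \mapsto -s^{p-1} + s^{q-1}$ is continuous on $[0,\infty)$ and $C^1$ on $(0,\infty)$, $f(0) = f(1) = 0$, and $s^{q-1} < s^{p-1}$ for $0 < s < 1$ (resp. $>$ for $s > 1$) gives $f(s) < 0$ on $(0,1)$ and $f(s) > 0$ on $(1,\infty)$. Then I would compute the two relevant limits. For $(f_0)$, $f(s)/s^{p-1} = -1 + s^{q-p} \to -1$ as $s \to 0^+$, so $C_0 = 1 \in [0,\infty)$. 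For $(f_1)$, using $f(1) = 0$ and $f'(1) = (q-1) - (p-1) = q - p$, a first-order expansion at $s = 1$ yields
\[
\frac{f(s)}{|s-1|^{p-2}(s-1)} = (q-p)\,|s-1|^{2-p} + o\!\left(|s-1|^{2-p}\right), \qquad s \to 1,
\]
so that $C_1 = q - 2$ when $p = 2$, $C_1 = 0$ when $1 < p < 2$ (since $2 - p > 0$), and $C_1 = +\infty$ when $p > 2$ (since $2 - p < 0$ and $q - p > 0$); these match the values recorded before the statement.

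Finally I would read off the three cases. If $p > 2$, then $C_1 = +\infty > \lambda_{k+1}^{\mathrm{rad}}$ for every $k \geq 1$, so Theorem \ref{th:main} gives at least $k$ non-constant radial solutions for each $k$, hence infinitely many, proving (i). If $p = 2$, then $C_1 = q - 2$, and the hypothesis $q - 2 > \lambda_{k+1}^{\mathrm{rad}}$ lets Theorem \ref{th:main} furnish at least $k$ non-constant radial solutions, proving (ii). If $1 < p < 2$, then $C_1 = 0$, and (iii) is exactly Theorem \ref{th:main2}(ii) for the annulus and Theorem \ref{th:main2}(i) for the ball. There is no real obstacle here; the only step requiring a little attention is the expansion of $f$ at $s = 1$ and the bookkeeping of the exponent $2 - p$ of $|s-1|$ in the three regimes, which the displayed identity makes transparent.
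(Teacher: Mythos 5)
Your proposal is correct and is exactly the argument the paper intends: it records the values $C_0=1$ and $C_1=0,\ q-2,\ +\infty$ in the three regimes for the prototype nonlinearity and then invokes Theorems \ref{th:main} and \ref{th:main2}, which is precisely what you do (with the verification of $(f_{\mathrm{reg}})$--$(f_1)$ and the expansion at $s=1$ spelled out in more detail than the paper bothers to). No gaps.
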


We remark that all the solutions found in this paper satisfy $u(0)<1$ and are increasing near the origin, see Section \ref{sec:simulations} for some open problems concerning solutions with $u(0)>1$. For the special case in which the nonlinearity is a small perturbation of the exponential function, solutions with $u(0)>1$ are found in \cite{PistoiaVaira2015}.

\subsection{Pre-existing literature}
Semilinear and quasilinear elliptic equations with Sobolev-critical and supercritical growth have been extensively studied in the literature, but mainly coupled with Dirichlet boundary conditions. H. Brezis adresses to Neumann problems Section 6.4 of his survey on Sobolev-critical equations \cite{brezis2014nonlinear}, saying that little is known in this case. 
The first result of which we are aware concerning Neumann boundary conditions is the one by Lin and Ni in \cite{LinNi88}. The authors consider the equation \eqref{modello} with $p=2$, $q>2N/(N-2)$ and $\Omega=\mathcal B(R_2)$, and prove that for $R_2$ sufficiently small \eqref{modello} only admits the constant solution, whereas for $R_2$ sufficiently large there exists a non-constant solution. We also refer to \cite{LinNiTakagi88} for similar results in the case $q<2N/(N-2)$. When $q$ is critical, this kind of result is no longer true. Indeed, in \cite{AdimurthiYadava}, Adimurthi and Yadava prove that in dimensions $N=4,\,5,\,6$ there exists a decreasing solution in balls of small radius. This depends not only on the dimension $N$, see \cite{AdimurthiYadava97,budd1991asymptotic}, but also on the shape of the domain, see \cite{WangWeiYan2010}. We also wish to mention that \cite{AdimurthiYadava97} is the first paper where \eqref{modello} is studied for $p\neq2$.

As soon as $N>p$, $q>Np/(N-p)$ and $\Omega=\mathcal B(R_2)$, the absence of Sobolev embeddings prevents from treating \eqref{modello} with the standard variational techniques.
Of course, the choice of working in a radial setting allows to gain some compactness, but not enough, for example, to define the Euler-Lagrange functional associated to the equation.
Recently, some techniques have been proposed to overcome this lack of compactness.
Different methods have been introduced simultaneously and independently, for $p=2$, in \cite{BonheureSerra2011}, \cite{GrossiNoris} and  \cite{SerraTilli2011}. In particular, in \cite{SerraTilli2011}, Serra and Tilli get over the lack of compactness by considering the cone of non-negative, non-decreasing radial functions of $H^1(\Omega)$. This technique proved to be quite powerful and has been adopted in many of the subsequent papers that we are going to illustrate. Serra and Tilli prove that, if $g$ satisfies some suitable assumptions and $a(|x|)>0$ is a non-decreasing and non-constant weight, then the radial problem 
\[
-\Delta u+u=a(|x|)g(u), \quad u>0 \text{ in } \mathcal B(R), \quad \partial_\nu u=0 \text{ on } \partial \mathcal B(R)
\]
admits at least one radially increasing solution. Secchi generalises this result to the case $p\neq 2$ in \cite{secchi2012increasing} using the same assumptions on $g$ e $a$. 

In \cite{BNW} and in \cite{ColasuonnoNoris} the authors consider the case $a$ constant, respectively in the cases $p=2$ and $p>2$ (see also \cite{ma2016radial}). The additional difficulty is now to prove that the solution found is itself non-constant, and this can be done under an extra condition on $g$, namely $(g_3)$ below. 

\begin{theorem}[{\cite[Theorem 1.3]{BNW},\cite[Theorem 1.1]{ColasuonnoNoris}}]\label{thm:intro}
Let $p\geq2$ and let $g:[0,\infty)\to\mathbb R$ be of class $C^1([0,\infty))$ and satisfy
\begin{itemize}
\item[$(g_1)$] $\lim_{s\to 0^+}\frac{g(s)}{s^{p-1}}\in[0,1)$;
\item[$(g_2)$] $\liminf_{s\to\infty}\frac{g(s)}{s^{p-1}}>1$;
\item[$(g_3)$] there exists a constant $u_0>0$ such that $g(u_0)=u_0^{p-1}$ and
$g'(u_0)>\lambda_{2}^{\textnormal{rad}}+1$ if $p=2$, or $g'(u_0)>(p-1)u_0^{p-2}$ if $p>2$.
\end{itemize}
Then there exists a non-constant, radial, non-decreasing solution of
\begin{equation}\label{eq:intro}
-\Delta_p u+u^{p-1}=g(u),\quad u>0\text{ in }\mathcal B(R),\quad 
\partial_\nu u=0\text{ on }\partial \mathcal B(R).
\end{equation}
\end{theorem}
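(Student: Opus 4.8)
The plan is to obtain the solution variationally, by a minimax scheme on the cone of monotone radial functions --- the technique of Serra and Tilli \cite{SerraTilli2011} (see \cite{secchi2012increasing} for the case $p\ne2$), adapted to a constant weight as in \cite{BNW,ColasuonnoNoris}. Writing $G(s):=\int_0^s g(t)\,dt$, solutions of \eqref{eq:intro} are formally critical points of
\[
J(u):=\frac1p\int_{\mathcal B(R)}\bigl(|\nabla u|^p+|u|^p\bigr)\,dx-\int_{\mathcal B(R)}G(u)\,dx,
\]
but since $g$ may be Sobolev-supercritical, $J$ need not even be finite on $W^{1,p}_{\mathrm{rad}}(\mathcal B(R))$. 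I would therefore work on the closed convex cone
\[
\mathcal C:=\bigl\{u\in W^{1,p}_{\mathrm{rad}}(\mathcal B(R))\ :\ u\ge 0,\ r\mapsto u(r)\ \text{non-decreasing}\bigr\},
\]
which plays two roles. First, every $u\in\mathcal C$ is bounded: a non-decreasing radial $W^{1,p}$ function attains its supremum at $r=R$, and that value is finite because, away from the origin, the radial embedding reduces to the one-dimensional $W^{1,p}(R/2,R)\hookrightarrow C^{0,1-1/p}$; quantitatively $\|u\|_{L^\infty(\mathcal B(R))}\le C_R\|u\|_{W^{1,p}(\mathcal B(R))}$ on $\mathcal C$, so $J$ is finite and of class $C^1$ there. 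Second --- the Serra--Tilli compactness --- a bounded sequence in $\mathcal C$ has a subsequence converging weakly in $W^{1,p}$ and uniformly on every $[\delta,R]$; since $|\mathcal B(\delta)|\to0$ as $\delta\to0$, this suffices to pass to the limit in the (possibly supercritical) term $\int G(u_n)\,dx$. This is what replaces the missing Sobolev compactness.

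Because of $(g_2)$ the functional $J$ is not bounded below on $\mathcal C$ (already $J\to-\infty$ along the constant functions $u\equiv M$ as $M\to\infty$), so the critical point has to come from a constrained minimization or a mountain-pass scheme rather than from free minimization. Hypothesis $(g_1)$ provides the geometry near the origin: when $\|u\|_{W^{1,p}}$ is small, $\|u\|_{L^\infty}$ is small too, the supercritical part of $\int G(u)$ is negligible, and $(g_1)$ yields $J(u)\ge c_0\|u\|_{W^{1,p}}^p>0$ on a small sphere of $\mathcal C$; while $(g_2)$ produces a direction in $\mathcal C$ along which $J\to-\infty$. One then fixes a mountain-pass level $c>0$ for $J|_{\mathcal C}$, extracts a Cerami sequence (the absence of the Ambrosetti--Rabinowitz condition is absorbed by the standard monotonicity/truncation device), uses the cone compactness to get a limit $u^*\in\mathcal C$, and --- since $\mathcal C$ is convex --- checks that the variational inequality $\langle J'(u^*),v-u^*\rangle\ge0$ for all $v\in\mathcal C$ upgrades to $J'(u^*)=0$; thus $u^*$ is a non-negative, non-decreasing weak solution of \eqref{eq:intro}, which by elliptic regularity and the strong maximum principle is either identically $0$, or a positive constant, or a genuinely non-constant solution.

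The role of $(g_3)$ is to discard the constant alternatives. The only radial constant solutions are $u\equiv0$ and the positive roots of $g(s)=s^{p-1}$, among which sits the $u_0$ of $(g_3)$. Linearising at $u_0$, stability in the radial class is governed by an operator whose eigenvalues are $\lambda_k^{\mathrm{rad}}-\bigl(g'(u_0)-(p-1)u_0^{p-2}\bigr)$ (for $p=2$ simply $\lambda_k^{\mathrm{rad}}-(g'(u_0)-1)$), and $(g_3)$ --- that is, $g'(u_0)-(p-1)u_0^{p-2}>\lambda_2^{\mathrm{rad}}$ --- makes the first two of them negative, so $u_0$ is a strict saddle for $J|_{\mathcal C}$. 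Concretely, testing $J$ along $u_0+\varepsilon\varphi$ with $\varphi$ a second radial Neumann eigenfunction (for $p>2$, an eigenfunction of the linearisation at $u_0$) and expanding to second order, $(g_3)$ gives $J(u_0+\varepsilon\varphi)<J(u_0)$ for $\varepsilon$ small; after symmetrising $u_0+\varepsilon\varphi$ into $\mathcal C$ (or dominating it by a monotone competitor of no larger energy) one can choose the minimax class so that $0<c<J(u_0)$, which excludes $u\equiv0$ and $u\equiv u_0$, the remaining constant solutions (if any) being discarded as in \cite{BNW,ColasuonnoNoris}. Hence $u^*$ is non-constant, and it is non-decreasing by construction, as required.

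The main obstacle is the conjunction of two difficulties that rarely occur together. On one side, the supercritical growth destroys every Sobolev estimate and forces one to remain inside the monotone cone at each step --- existence and compactness of the Cerami sequence, and the passage from the variational inequality to the Euler--Lagrange equation. On the other side, $J$ is unbounded below and the constant $u_0$ is itself a critical point, so certifying non-constancy is delicate and the whole weight of $(g_3)$ has to go into a second-order expansion at $u_0$ that separates $u^*$ from the constant solutions. For $p>2$ there is the further complication that $-\Delta_p$ is degenerate: its linearisation at a constant is only formally an eigenvalue problem, and one must invoke carefully the $C^{1,\alpha}$ regularity and the strong maximum/Hopf principles for the $p$-Laplacian to make both the variational-inequality step and the saddle estimate rigorous. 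The rest --- coercivity bookkeeping, symmetrisation into $\mathcal C$, identification of the constant solutions --- is technical but routine once the cone compactness is available.
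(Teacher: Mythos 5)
You should first note that the paper does not prove Theorem \ref{thm:intro} at all: it is quoted verbatim from \cite{BNW} (for $p=2$) and \cite{ColasuonnoNoris} (for $p>2$), and your strategy is essentially the strategy of those cited papers --- the Serra--Tilli cone of non-negative, non-decreasing radial functions, the $L^\infty$ bound and compactness available on that cone, a minimax/truncation scheme, the passage from the variational inequality on the cone to the Euler--Lagrange equation, and $(g_3)$ to exclude the constants. The present paper takes a genuinely different route to this result: it recovers and generalizes it (via Theorem \ref{th:main} with $k=1$, noting that $(g_3)$ translates into $C_1>\lambda_2^{\textnormal{rad}}$ for $p=2$ and $C_1=+\infty$ for $p>2$) by a shooting method --- rewriting the radial equation as the planar system \eqref{sys1}, passing to the polar-like coordinates \eqref{polari}, and showing that $\theta_d(R_2)>2\pi_p$ for $d$ close to $0$ while $\theta_1(R_2)=\pi_p$; an intermediate $d$ with $\theta_d(R_2)=2\pi_p$ yields the solution, which is automatically increasing because $\theta_d\in(\pi_p,2\pi_p)$ forces $v=r^{N-1}\varphi_p(u')>0$. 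The shooting route buys multiplicity, exact counts of the intersections with $u\equiv1$, and the regimes $C_1=+\infty$ and $C_1=0$, with no growth or variational structure needed at infinity; the variational route gives a single solution with a minimax characterization but requires the full cone machinery.

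Two steps in your sketch carry most of the real difficulty and are stated too lightly. First, the claim that the variational inequality $\langle J'(u^*),v-u^*\rangle\ge0$ on $\mathcal C$ ``upgrades'' to $J'(u^*)=0$ is not a consequence of convexity of $\mathcal C$: it is the central technical lemma of the Serra--Tilli method, proved by constructing admissible monotone variations, and it is precisely where the constant-weight case of \cite{BNW,ColasuonnoNoris} needs extra work compared with \cite{SerraTilli2011}. Second, for $p>2$ the ``linearisation of $-\Delta_p$ at a constant'' is degenerate (the second variation of $\int|\nabla u|^p$ at $u_0$ vanishes identically), so there is no eigenvalue problem with eigenvalues $\lambda_k^{\textnormal{rad}}-\bigl(g'(u_0)-(p-1)u_0^{p-2}\bigr)$; the correct and simpler statement is that under $(g_3)$ the second variation of $J$ at $u_0$ is negative along every non-trivial direction, which is exactly why $(g_3)$ for $p>2$ does not involve $\lambda_2^{\textnormal{rad}}$. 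Neither point invalidates your strategy, but both must be argued, not asserted, and the exclusion of possible constant solutions other than $0$ and $u_0$ also needs more than the level estimate $c<J(u_0)$.
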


Let us first comment the case $p=2$. We notice that, in the semilinear case, condition $(g_3)$ involves the second radial eigenvalue of $-\Delta$ with Neumann boundary conditions. In fact, the authors in \cite{bonheure2016multiple} show that a bifurcation phenomenon is underlying the existence result, at least in the case of the prototype nonlinearity $g(u)=|u|^{q-2}u$. They prove that at $q-2=\lambda_{k+1}^{\textnormal{rad}}$, $k\geq1$, a new branch of solutions bifurcates from the constant branch $u\equiv1$. This nontrivial branch consists of solutions having exactly $k$ oscillations around the constant solution $1$. We also refer to \cite{BonheureGrossiNorisTerracini2015}, \cite{BonheureCasterasNoris2016}, \cite{BonheureCasterasNoris2017} for other results about this class of problems. As mentioned above, it was conjectured in \cite{BNW} that a similar behavior should hold also for a general nonlinearity. If we consider a nonlinearity $f$ related to $g$ by $f(s)=g(s)-s$, the conjecture asserts that, if $f'(u_0)>\lambda_{k+1}^{\textnormal{rad}}$, $k\geq1$, there should exist a radial solution of \eqref{main} having exactly $k$ intersections with the constant solution $u_0$. 
For $f$ asymptotically linear (and hence Sobolev subcritical), this conjecture was proved in \cite{ma2016bonheure}. By means of bifurcation techniques, the authors show that, if $f'(u_0)>\lambda_{k+1}^{\textnormal{rad}}$ for some $k\ge 1$, then there exist at least $2k$ different non-constant solutions of \eqref{eq:intro}, $k$ of them are increasing and $k$ decreasing in a neighborhood of zero. 
In the present paper (cf. Corollary \ref{conjecturesolved}) we are able to provide a complete proof of the conjecture, without assuming any growth conditions at infinity on $f$. 

We remark that the assumptions $(f_{\mathrm{reg}})$-$(f_1)$ are substantially more general than $(g_1)$-$(g_3)$. Indeed, we have $f(s)=-s^{p-1}+g(s)$ and $(g_1)$ requires that the constant $C_0$ defined in $(f_0)$ belongs to $(0,1]$, $(g_3)$ requires that, when $p > 2$, $C_1$ defined in $(f_1)$ satisfies $C_1=+\infty$, and $(g_2)$ is equivalent to $\liminf_{s\to+\infty}f(s)>0$. In addition, in the present paper we find infinitely many solutions of \eqref{main} in the case $C_1=+\infty$, whereas in \cite{ColasuonnoNoris} only one solution was found (the non-decreasing one, which we can now prove being indeed strictly increasing, see \eqref{polari}-\eqref{sys2} below). Indeed, to the best of our knowledge, no multiplicity results were known for problem \eqref{main} in the case $p\neq2$. In particular, for $p<2$ we obtain here a multiplicity result which is completely new in the literature. As already noticed, the behavior for $C_1=0$ (corresponding to $p<2$ for the prototype nonlinearity \eqref{eq:prototype}) is different from the one for $C_1>0$, since the existence of solutions depends on the diameter of the domain, and solutions always come in couples, so that we find two solutions with the same oscillatory behavior. In this regard, see also the numerical simulations in Section \ref{sec:simulations}.

For results in a non-radial setting (in the case $p=2$), we refer to the recent works \cite{CowanMoameni,delPinoPistoiaVaira2016}.
We also wish to mention the generalisations to systems considered in \cite{bonheure2013radial,MaChenWang} and the extensive literature concerning concentrating solutions for supercritical Neumann problem with a perturbation parameter, see for example \cite{MalchiodiMontenegro2002,MalchiodiMontenegro2004,Malchiodi2004,MalchiodiNiWei2005,
Malchiodi2005,delPino2015interior}.

\subsection{Main ideas of the proof and organization of the paper}
We adopt a shooting method: it seems indeed that such a technique turns out to be particularly effective 
when trying to identify the different multiplicity scenarios appearing on varying of $p$; moreover, it allows to
avoid most of the technical assumptions on the nonlinearity. For an application of the shooting method in a similar situaton we refer to \cite{BarutelloSecchiSerra}, where the authors consider the supercritical H{\'e}non equation with Neumann boundary conditions.
%\sout{We would like to remark that, whereas in the case $p=2$ the multiplicity of solutions can be obtained rather directly by applying the Crandall-Rabinowitz theorem of bifurcation from the simple eigenvalues (see \cite{bonheure2016multiple}), it seems quite difficult to apply the bifurcation theory to the quasilinear operator $-\Delta_p$ for $p\neq2$.} 
%\textcolor{red}{Ho trovato vari articoli, anche abbastanza standard, che fanno biforcazione per equazioni quasilineari, sara interessante guardarli}

In Section \ref{preliminary} we rewrite the radial equation in \eqref{main} as the planar ODE system
\[
r^{N-1}|u'|^{p-2}u'=v, \qquad v'=-r^{N-1}f(u),
\]
(cf. \eqref{sys1}) and prove local uniqueness, continuous dependence and global continuability of solutions. The shooting method consists in studying the initial value problem $u(R_1)=1-d$, $v(R_1)=0$ and looking for values $d\in(0,1)$ such that the corresponding solution $(u_d,v_d)$ satisfies $v_d(R_2)=0$. Thanks to the local uniqueness, we can pass to polar-like coordinates $(\rho(r),\theta(r))$ around the point $(1,0)$ (see \eqref{polari}), so that the problem reduces to
\begin{equation*}
\text{find } d\in(0,1) \text{ such that } \theta_d(R_2)=k\pi_p \text{ for some } k\in\Z,
\end{equation*}
with $\pi_p$ defined in Lemma \ref{propCpSp}.

In Section \ref{A} we recall some known results concerning the associated eigenvalue problem, while Section \ref{sec:theo1} is devoted to the proof of Theorem \ref{th:main}. The main point is to show that, if $C_1>\lambda_{k+1}^{\mathrm{rad}}$, $k\geq1$, then $\theta_d(R_2)> (k+1)\pi_p$ for $d$ sufficiently close to $0$. This can be done by comparing \eqref{main} with its associated eigenvalue problem. In Section \ref{sec:theo2} we prove Theorem \ref{th:main2}; here, the key step is to show that, if $R_1,R_2$ are chosen as in the corresponding statement, then $\theta_{d}(R_2) > (k+1)\pi_p$
for some $d \in (0,1)$. This is proved by adapting to our context a phase-plane argument introduced in \cite{BosZan13}.
 Finally, in Section \ref{sec:simulations} we present some numerical simulations obtained with the software AUTO07p \cite{AUTO}, and propose some open problems.

\section{Multiplicity of solutions via the shooting method}

\subsection{Preliminary results}\label{preliminary}

First of all, as usual when dealing with positive solutions of a boundary value problem, we introduce a continuous extension $\hat f: \mathbb{R} \to \mathbb{R}$ of $f$ by setting
$$
\hat f(s):=\begin{cases}
	f(s) \quad&\text{if } s \geq 0,\\
	0 &\text{if } s < 0.
\end{cases}
$$
By taking into account the radial symmetry of \eqref{main}, we consider the following 1-dimensional problem
\begin{equation}\label{ode2}
\begin{cases}
-\left(r^{N-1}\varphi_p(u')\right)'=r^{N-1}\hat f(u)\quad\mbox{in }(R_1,R_2),\\
u'(R_1)=u'(R_2)=0,
\end{cases}
\end{equation}
where 
$$
\varphi_p(s) := \vert s \vert^{p-2} s
$$
and the prime symbol $'$ denotes the derivative with respect to $r$. 
We remark that, in the case $\Omega=\mathcal B(R_2)$, namely $R_1=0$, the boundary condition $u'(0)=0$ comes from the symmetry and the $C^{1,\alpha}(\bar\Omega)$-regularity of the solutions $u$ of \eqref{main}, cf. \cite[Theorem~2]{Lieberman} and also \cite{RW99}. For future reference, we also note that $\varphi_{p}^{-1}=\varphi_{p'}$, where $$\frac1p+\frac1{p'}=1.$$  
We first give the following maximum principle-type result (compare also with \cite[Section 2]{ManNjoZan95}). 
%
%the proof is rather standard but we give some details for the reader's convenience; 
%

\begin{lemma}\label{max}
Let $u$ be a solution of \eqref{ode2}. Either $u\equiv -C$, with $C\geq0$, or $u(r) > 0$ for every $r \in [R_1,R_2]$.
\end{lemma}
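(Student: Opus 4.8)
The statement is a dichotomy: a solution of \eqref{ode2} is either a non-positive constant or strictly positive everywhere. The plan is to argue by contradiction, assuming that $u$ is not identically a non-positive constant, yet $u(r_0) \le 0$ for some $r_0 \in [R_1,R_2]$, and to derive a contradiction by a careful local analysis at a point where $u$ attains its minimum.

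First I would record the two easy cases. If $\min_{[R_1,R_2]} u > 0$ we are in the second alternative and there is nothing to prove, so assume $m := \min_{[R_1,R_2]} u \le 0$, attained at some $r_0$. Note that the equation in \eqref{ode2} only sees $\hat f(u)$, and since $\hat f(s) = 0$ for $s \le 0$, on any subinterval where $u \le 0$ the function $r \mapsto r^{N-1}\varphi_p(u')$ is constant. The key local step is to show that $u$ must then be constant on a neighborhood of $r_0$ and, by a connectedness/continuation argument, on all of $[R_1,R_2]$; once $u$ is a constant $c \le 0$, the boundary conditions are automatically satisfied and $c = -C$ with $C \ge 0$, giving the first alternative.

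For the local step, distinguish whether $r_0$ is interior or a boundary point. If $r_0 \in (R_1,R_2)$, then $u'(r_0) = 0$, hence $r_0^{N-1}\varphi_p(u'(r_0)) = 0$; integrating the equation from $r_0$, for $r$ near $r_0$ one gets
\[
r^{N-1}\varphi_p(u'(r)) = -\int_{r_0}^r s^{N-1}\hat f(u(s))\,ds.
\]
As long as $u \le 0$ near $r_0$ (which holds on a right and left neighborhood by continuity and minimality), the integrand vanishes, so $u' \equiv 0$ there and $u$ is locally constant $= m \le 0$. If $r_0 \in \{R_1,R_2\}$, the Neumann condition $u'(r_0) = 0$ plays exactly the same role (and when $R_1 = 0$ one uses $u'(0)=0$ from the regularity already invoked before the lemma). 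Then I would let $I$ be the maximal subinterval of $[R_1,R_2]$ containing $r_0$ on which $u \equiv m$; $I$ is closed by continuity, and the argument just given shows it is also open in $[R_1,R_2]$ (at any endpoint $r_1$ of $I$ inside the domain, $u'(r_1) = 0$ and $u(r_1) = m \le 0$, so the same integral identity forces $u$ to stay equal to $m$ slightly beyond $r_1$), hence $I = [R_1,R_2]$ by connectedness. Thus $u \equiv m \le 0$, i.e. $u \equiv -C$ with $C = -m \ge 0$.

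The main obstacle I anticipate is the regularity of $u$ and the precise meaning of "solution" of \eqref{ode2}: since $\varphi_p$ is only a homeomorphism (not $C^1$ at $0$ when $p \ne 2$), one cannot naively differentiate, and the continuation-of-the-constant argument must be run at the level of the integral formulation $r^{N-1}\varphi_p(u'(r)) = v(r)$ with $v' = -r^{N-1}\hat f(u)$, exploiting that $\varphi_p$ is injective so that $v \equiv 0$ on a subinterval forces $u' \equiv 0$ there. A secondary subtlety is the point $r = R_1 = 0$ in the ball case, where the weight $r^{N-1}$ degenerates; here one leans on the $C^{1,\alpha}$ regularity and the condition $u'(0)=0$ recalled just above the lemma, so that the same integral identity applies. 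Everything else is a routine bootstrap from the integral equation, and I would not expect to need any growth or sign information on $f$ beyond $\hat f \equiv 0$ on $(-\infty,0]$.
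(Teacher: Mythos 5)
Your argument correctly establishes the first half of the dichotomy, namely that a solution taking a strictly negative value must be a negative constant; your continuation from the minimum point is essentially the same mechanism as the paper's first step (which works instead with a maximal interval of negativity), and for this half only the fact that $\hat f$ vanishes on $(-\infty,0]$ is used. However, there is a genuine gap: your case analysis collapses the two alternatives $m<0$ and $m=0$, and the step ``$u\le 0$ on a right and left neighborhood of $r_0$ by continuity and minimality'' is false when $m=0$. Minimality gives $u\ge m$, not $u\le 0$; if the minimum value is exactly $0$, then near $r_0$ one only knows $u\ge 0$, and wherever $0<u<1$ one has $\hat f(u)=f(u)<0$, so the integrand in your identity does not vanish and $v$ is not locally constant. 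Consequently your argument does not exclude a non-negative, non-constant solution with a double zero $u(r_0)=u'(r_0)=0$, which is precisely the delicate case the lemma must rule out to get strict positivity.

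Handling that case is the second (and harder) half of the paper's proof, and it cannot be done with the information you allow yourself: your closing remark that no growth or sign information on $f$ is needed beyond $\hat f\equiv 0$ on $(-\infty,0]$ is incorrect. The paper invokes hypothesis $(f_0)$ (boundedness of $f(s)/s^{p-1}$ as $s\to 0^+$) together with the uniqueness theorem of Reichel and Walter, \cite[Theorem 4]{ReiWal97} case $(\delta)$, for the Cauchy problem with data $u(r_0)=u'(r_0)=0$, to conclude that the only such solution is $u\equiv 0$, contradicting non-triviality. Some hypothesis of this kind is unavoidable: for a nonlinearity behaving like $-s^{q-1}$ with $q<p$ near the origin (which violates $(f_0)$), the degenerate Cauchy problem with a double zero admits nontrivial non-negative ``dead core'' solutions, so strict positivity would actually fail. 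You need to add this second step, or an equivalent Gronwall-type estimate exploiting $(f_0)$.
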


\begin{proof}
	Let us first prove that either $u$ is a negative constant, or $u$ is non-negative. To this end, suppose by contradiction that $u$ is non-constant and that $u(r_0) < 0$ for some $r_0 \in (R_1,R_2)$. Let $[r^-,r^+] \subset [R_1,R_2]$ be the maximal interval containing $r_0$ such that $u(r) < 0$ for every $r \in (r^-,r^+)$. By the definition of $\hat f$, we have
\begin{equation}\label{eq:maximum_princ}
\hat f(u(r))=0 \quad \text{for every } r\in [r^-,r^+].
\end{equation}
Since $u$ is non-constant, by the equation in \eqref{ode2} we get the existence of $r_1\in (R_1,R_2)$ such that $u(r_1)>0$, that is to say, $r^-\neq R_1$ or $r^+\neq R_2$. Suppose, to fix the ideas, that $r^+\neq R_2$, so that
\begin{equation}\label{eq:maximum_princ2}
u(r^+)=0.
\end{equation}
Now we distinguish two cases: either $r^-\neq R_1$ or $r^-=R_1$. If the first case occurs, then $u(r^-)=0$. Hence, by \eqref{eq:maximum_princ2} we have $u'(r^-) \leq 0 \leq u'(r^+)$, so that, since $\varphi_p$ is non-decreasing, $\varphi_p(u'(r^-)) \leq 0 \leq \varphi_p(u'(r^+))$ and also 
\begin{equation}\label{rN-1phi}
(r^-)^{N-1}\varphi_p(u'(r^-)) \leq 0 \leq (r^+)^{N-1}\varphi_p(u'(r^+)).
\end{equation} 
Then, using the equation in \eqref{ode2}, \eqref{eq:maximum_princ} and \eqref{rN-1phi}, we obtain that $r^{N-1}\varphi_p(u'(r)) = 0$ for every $r \in [r^-,r^+]$, implying $u = 0$ in $[r^-,r^+]$ as well, a contradiction. 

If the second case occurs, that is $r^-=R_1$, then $u'(r^-)=0$. Again \eqref{eq:maximum_princ} implies $r^{N-1}\varphi_p(u'(r)) = 0$ for every $r \in [R_1,r^+]$ and hence, by \eqref{eq:maximum_princ2}, $u = 0$ in $[R_1,r^+]$, a contradiction.
	
	It remains to show that any non-negative solution of \eqref{ode2} is positive. To this aim, we observe that if the (non-negative) function $u$ vanishes but is not identically zero, then it necessarily has a double zero, that is, $u(r_0) = u'(r_0) = 0$ for some $r_0 \in [R_1,R_2]$.
By assumption $(f_0)$ and \cite[Theorem 4]{ReiWal97}-$(\delta)$, the solution of this Cauchy problem is unique and so it has to be $u\equiv 0$ on $[R_1,R_2]$, which is a contradiction. 	
\end{proof}

In view of the above lemma, from now on we will study problem \eqref{main} simply by looking for non-constant solutions of \eqref{ode2}. This will be done by using a shooting approach: we write the equation in \eqref{ode2} as the planar ODE system in $(R_1,R_2)$
\begin{equation}\label{sys1}
u' = \varphi_p^{-1}\left(\frac{v}{r^{N-1}}\right), \qquad v' = - r^{N-1}\hat f(u), 
\end{equation}
we consider the associated Cauchy problem with initial conditions
\begin{equation}\label{ci}
u(R_1) = 1-d, \qquad v(R_1) = 0,
\end{equation}
where $d \in [0,1]$, and we look for values $d \in (0,1)$ such that the corresponding solution $(u_d,v_d)$ is defined on the whole $[R_1,R_2]$ and satisfies $v_d(R_2) = 0$ (and hence $u_d'(R_2) = 0$). 

We stress that, when $\Omega = \mathcal{A}(R_1,R_2)$ (that is, when $R_1 > 0$), the initial condition $v(R_1) = 0$ plainly corresponds to $u'(R_1) = 0$;
on the other hand, when $\Omega = \mathcal B(R_2)$ (that is, when $R_1 = 0$), the situation is more delicate. Indeed, the ODE system \eqref{sys1} exhibits, for $r = R_1 = 0$, a singularity of order $r^{-\frac{N-1}{p-1}}$. 
When $p > N$, such a singularity is in $L^1$ and system \eqref{sys1} can be treated within the Carath\'eodory theory of ODEs (see, for instance, \cite{Hale}); on the contrary, for $1 < p \leq N$ this is no longer true. Nonetheless, it can be shown via fixed point arguments in Banach spaces that the Cauchy problem \eqref{sys1}-\eqref{ci} (requiring $v(R_1) = 0$) still has a (local) solution. All this is nowadays well-known (see \cite{FraLanSer96,ReiWal97}) and any solution $(u(r),v(r))$ of \eqref{sys1}-\eqref{ci} is such that $u'(R_1) = 0$ and $u$ solves the equation in \eqref{ode2} in the usual sense (namely, $u(r)$ and $r^{N-1}\varphi_p(u'(r)) = v(r)$ belong to $C^1([R_1,R_2])$ and the equation is satisfied pointwise). See also Remark \ref{cambiovariabile}.

To make our shooting procedure effective, we now prove the following result of uniqueness, continuous dependence, and global continuability.

\begin{lemma}\label{uni} 
	For any $d \in [0,1]$, the solution $(u_d,v_d)$ of \eqref{sys1}-\eqref{ci} is unique
	and can be defined on the whole $[R_1,R_2]$; moreover, 
	if $(d_n)\subset(0,1)$ is such that $d_n \to d \in [0,1]$, then $(u_{d_n}(r),v_{d_n}(r)) \to (u_d(r),v_d(r))$ uniformly in $r \in [R_1,R_2]$.
\end{lemma}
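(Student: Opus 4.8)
I would proceed as follows: first an a priori energy estimate, which together with the local existence results of \cite{FraLanSer96,ReiWal97} already gives global continuability; then uniqueness; then continuous dependence by a compactness argument.

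\emph{Energy estimate and continuability.} I would rewrite \eqref{sys1}-\eqref{ci} as the integral system $u_d(r)=1-d+\int_{R_1}^r\varphi_{p'}\bigl(v_d(s)/s^{N-1}\bigr)\,ds$, $v_d(r)=-\int_{R_1}^r s^{N-1}\hat f(u_d(s))\,ds$; local existence, including the singular case $R_1=0$ (where $|v_d(r)|\le\|\hat f\|_\infty r^N/N$ keeps $v_d/r^{N-1}$ bounded near the origin), follows from \cite{FraLanSer96,ReiWal97}. For any local solution the energy $\mathcal E(r):=\tfrac{p-1}{p}|u'(r)|^p+F(u(r))$, with $F(s):=\int_1^s\hat f$, satisfies $\mathcal E'(r)=-\tfrac{N-1}{r}|u'(r)|^p\le0$ by a direct computation from the equation, while $(f_{\mathrm{eq}})$ gives $F\ge0$, $F$ strictly decreasing on $[0,1]$, and $F=0$ only at $s=1$. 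Hence $F(u_d(r))\le\mathcal E(r)\le\mathcal E(R_1)=F(1-d)$, which yields, uniformly in $d\in[0,1]$, the bounds $u_d(r)\ge1-d$ (so the orbit stays in $\{u>0\}$ when $d<1$), $|u_d'(r)|\le M_0:=\bigl(\tfrac{p}{p-1}F(0)\bigr)^{1/p}$, $u_d(r)\le U_0:=1+(R_2-R_1)M_0$, and $|v_d(r)|\le V_0:=R_2^{N-1}(R_2-R_1)\max_{[0,U_0]}|f|$. These uniform bounds rule out blow-up in finite $r$, hence every maximal solution is defined on all of $[R_1,R_2]$.

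\emph{Uniqueness.} I would argue by continuation, checking local uniqueness at each point met by the trajectory. By $(f_{\mathrm{reg}})$, on $\{u\neq0\}\cap\{v\neq0\}$ the right-hand side of \eqref{sys1} is $C^1$, hence locally Lipschitz, in $(u,v)$; at a point where $v=0$ but $\hat f(u)\neq0$ one uses $v$ as the independent variable, and at a point where $u=0$ but $v\neq0$ one uses $u$ as the independent variable, obtaining a Lipschitz system in either case, so classical ODE uniqueness applies. The only remaining points, where both components of the field vanish, are $(u,v)=(0,0)$, $(u,v)=(1,0)$ and $\{(c,0):c<0\}$; on the last set $\hat f\equiv0$ locally, so the only solution is the constant $c$. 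At $(0,0)$, uniqueness --- the solution being $u\equiv0$ --- follows from $(f_0)$ and \cite[Theorem 4]{ReiWal97}-$(\delta)$, exactly as in Lemma \ref{max}; this settles $d=1$. At $(1,0)$, if $(u(\bar r),u'(\bar r))=(1,0)$ then $\mathcal E(\bar r)=0$, so by monotonicity and non-negativity of $\mathcal E$ we get $\mathcal E\equiv0$, i.e.\ $u\equiv1$, on $[\bar r,R_2]$; this settles $d=0$ (with $\bar r=R_1$). For $d\in(0,1)$ the orbit stays in $\{u>0\}$, hence never visits $(0,0)$ or $\{(c,0):c<0\}$, and if it ever reaches $(1,0)$ --- say first at some $\bar r$ --- then $u\equiv1$ on $[\bar r,R_2]$, while on $[R_1,\bar r)$ the local uniqueness above applies. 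Thus $(u_d,v_d)$ is unique for every $d\in[0,1]$.

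\emph{Continuous dependence.} The estimates above make $\{(u_d,v_d)\}_{d\in(0,1)}$ equibounded, and equicontinuous since $|u_d'|\le M_0$ and $|v_d'|\le R_2^{N-1}\max_{[0,U_0]}|f|$. Given $d_n\to d$, Ascoli--Arzel\`{a} yields a subsequence along which $(u_{d_n},v_{d_n})\to(\bar u,\bar v)$ uniformly; passing to the limit in the integral system --- using uniform convergence of $u_{d_n}$, continuity of $\hat f$ and $\varphi_{p'}$, and, when $R_1=0$, the integrable domination $|\varphi_{p'}(v_{d_n}(s)/s^{N-1})|\le(\|\hat f\|_\infty s/N)^{p'-1}$ near the origin --- shows that $(\bar u,\bar v)$ solves \eqref{sys1}-\eqref{ci} with parameter $d$, hence equals $(u_d,v_d)$ by uniqueness. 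Since the limit does not depend on the subsequence, the whole sequence converges uniformly, as claimed. I expect the uniqueness at the degenerate points $(0,0)$ and $(1,0)$ to be the only genuinely delicate part --- everything else is classical --- and it is precisely there that the hypotheses pay off: $(f_0)$ makes \cite{ReiWal97} applicable at $(0,0)$, $(f_{\mathrm{reg}})$ supplies the Lipschitz regularity off the degenerate set, and the non-negativity of $F$ coming from $(f_{\mathrm{eq}})$ handles $(1,0)$ via the monotone energy.
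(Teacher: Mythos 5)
Your overall strategy coincides with the paper's: the same decreasing energy $H(r)=\vert u'(r)\vert^p/p'+\hat F(u(r))$ (your $\mathcal E$, since $1/p'=(p-1)/p$) gives both global continuability and forward uniqueness at $(1,0)$; the degenerate point $(0,0)$ is sent to \cite[Theorem 4]{ReiWal97}-$(\delta)$ via $(f_0)$; and continuous dependence follows once uniqueness and global existence are in place (your Ascoli--Arzel\`a argument is a correct, more self-contained version of the paper's appeal to standard theory). Your reparametrization trick --- using $v$, resp. $u$, as independent variable where $v'\neq 0$, resp. $u'\neq 0$ --- is a legitimate elementary substitute for the cases $(\beta)(v)$ and $(\alpha)(iii)$ of \cite[Theorem 4]{ReiWal97} that the paper invokes, and your observation that the energy bound forces $u_d\geq 1-d$ is a nice addition. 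Two points, however, need attention.

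First, your uniqueness argument does not cover the initial point itself in the ball case. When $R_1=0$ and $d\in(0,1)$, the trajectory starts at $(r,u,v)=(0,1-d,0)$ with $\hat f(1-d)\neq0$; there the system is singular of order $r^{-\frac{N-1}{p-1}}$, the right-hand side of \eqref{sys1} is not even defined at $r=0$, and your change of independent variable produces $dr/dv=-1/(r^{N-1}\hat f(u))$, which blows up at $r=0$ for $N\geq2$. So ``classical ODE uniqueness'' fails precisely at the one point where the shooting starts; you must still invoke the fixed-point/singular-ODE uniqueness results of \cite{FraLanSer96,ReiWal97} there (as the paper does, citing the cases of \cite[Theorem 4]{ReiWal97} with $\bar r\geq 0$), not only for existence. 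Second, you prove only \emph{forward} uniqueness at $(1,0)$. That does suffice for uniqueness of the initial value problem as stated in the lemma, but the paper also establishes \emph{backward} uniqueness at $(1,0)$ via the Gronwall estimate $\vert H'\vert\leq\eta H$, and this is exactly what guarantees $(u_d(r),u_d'(r))\neq(1,0)$ for every $r$ when $d\in(0,1]$ --- the property used immediately after the lemma to introduce the polar coordinates \eqref{polari}. Under your argument the scenario in which $u_d$ reaches $(1,0)$ at some $\bar r$ and equals $1$ from then on is not excluded, so the backward step should be added before the rest of the paper can proceed.
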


\begin{proof}
We first focus on the uniqueness; notice that by this we mean that $(u_d,v_d)$ remains unique as long as defined and, in turn, this requires us to investigate the local uniqueness of any Cauchy problem 
	$$
	u(\bar r) = \bar u, \qquad v(\bar r) = \bar v, 
	$$
associated with \eqref{sys1}, where $\bar r \in [R_1,R_2]$ and $(\bar u,\bar v) \in [0,1] \times \{0\}$ if $\bar r = 0$ and $(\bar u,\bar v) \in \mathbb{R}^2$ if $\bar r > 0.$ 
If $\bar r\neq0$, $\bar v\neq0$ and $\bar u\neq0$, this follows from the Cauchy-Lipschitz Theorem. 
Otherwise, this is a non trivial issue for three different reasons: first, for $\bar r =0$ the system is singular (this being possible, of course, only if $R_1 = 0$); second, the system is not Lipschitz continuous when $v=0$ for $p> 2$ (since $\varphi_p^{-1}$ is not Lipschitz at zero); third, the system is not Liptschiz continuous when $u = 0$ for $1 < p < 2$ (since $\hat f$ is not Lipschitz at zero). This subtle problem has been extensively investigated in \cite{ReiWal97}; according to Theorem 4 therein, we can conclude that the uniqueness holds true in each of the following cases:
	\begin{itemize}
		\item $\bar r \geq 0$, $\bar u = 0$ and $\bar v = 0$: this follows from $(f_0)$, corresponding to case $(\delta)$ of \cite[Theorem 4]{ReiWal97};
		\item $\bar r \geq 0$, $\bar v = 0$ and $\bar u \not\in \{0,1\}$ for $p > 2$: this follows from the facts that $\hat f(u) \neq 0$ for $0< u \neq 1$, corresponding to case $(\beta)(v)$ of \cite[Theorem 4]{ReiWal97}, and that $\hat f(u) \equiv 0$ for $u < 0$, since in this can the equation can be explicitly solved; 
		\item $\bar u = 0$ and $\bar v \neq 0$ (hence, $\bar r > 0$) for $1 < p < 2$: this follows again from $(f_0)$, corresponding to case $(\alpha)(iii)$ of \cite[Theorem 4]{ReiWal97}.
	\end{itemize} 
	Thus, the only remaining possibility to be analyzed is $\bar r \geq 0$, $\bar u = 1$ and $\bar v = 0$; in this case, we must show that the only solution
	is $u \equiv 1$. To this end, we define the function 
	$$
	H(r) := \frac{\vert u'(r) \vert^{p}}{p'}+ \hat F(u(r)), 
	$$
	with $r$ in a neighborhood of $\bar{r}$, $\hat F(u) = \int_1^u \hat f(s)\,ds$.
	Notice that, in view
	of $(f_\textrm{eq})$ and of the definition of $\hat f$, it holds that $\hat F(s) \geq 0$ for any $s \in \mathbb{R}$ and $\hat F(s) = 0$ if and only if $s=1$. Hence $H(r) \geq 0$ and $H(r) = 0$ if and only if $u(r) = 1$ and $u'(r) = 0$. In particular, $H(\bar r)=0$. Observing that 
	$$
	\vert u'(r) \vert^p = \vert \varphi_p(u'(r)) \vert^{p'}
	$$
	and that
	%\begin{equation}\label{aux}
	$$	
	\left(\varphi_p(u'(r))\right)' = - \frac{N-1}{r} \varphi_p(u'(r)) - \hat f(u(r)) \quad \mbox{ for } r \neq 0,
	$$	
	%\end{equation} 
	a straightforward computation yields
	$$
	H'(r) = - \frac{N-1}{r}\vert u'(r) \vert^p\le0 \quad \mbox{ for } r \neq 0.
	$$
	It follows that $H(r) = 0$ for $r \geq \bar{r}$, so that $u(r)= u(\bar r)= 1$ for $r \geq \bar{r}\ge 0$. If $\bar r = 0$, this is enough to conclude; if $\bar{r} > 0$ we also need to check the backward uniqueness. To this end, we observe that 
	$$
	\vert H'(r) \vert = \frac{N-1}{r} \vert u'(r) \vert^p \leq \eta H(r)
	$$
	for $r > 0$ in a neighborhood of $\bar r$ and $\eta > 0$ a suitable constant (depending on the neighborhood). Hence, by Gronwall's Lemma,
	$$
	H(r) \leq H(\bar r) e^{\eta  \vert r-\bar r \vert}
	$$
	for $r$ in a (left) neighborhood of $\bar r$. Again, this implies $H(r) = 0$ and, finally, $u(r) = 1$ in a (left) neighborhood of $\bar r$.
	\smallbreak
	We now prove that the solution $(u_d,v_d)$ can be globally extended to the whole interval $[R_1,R_2]$. By contradiction, suppose that its maximal interval of definition is $[R_1,r^*)$ for some $r^* \leq R_2$; then, the standard theory of ODEs implies that
	\begin{equation}\label{blow}
	\lim_{r \to (r^*)^-} \left(\vert u_d(r) \vert + \vert v_d(r) \vert\right) = +\infty.
	\end{equation}
Since $\hat F\ge0$	and $H'(r)\le0$, we get 
	$$
	\frac{|u_d'(r)|^p}{p'}\le H(r)\le H(R_1)\quad\mbox{for all }r\in[R_1,r^*),
	$$
that is $|u_d'|$ is bounded. Consequently, 
	$$
	|v_d(r)|=r^{N-1}|u_d'(r)|^{p-1}\le C\quad\mbox{and}\quad 
	|u_d(r)|\le u_d(R_1)+\int_{R_1}^r|u_d'(s)|ds\le C'
	$$
	for all $r\in[R_1,r^*)$ and for some $C$, $C'>0$. 
	Hence, \eqref{blow} cannot occur, and so $(u_d,v_d)$ can be extended to the whole interval $[R_1,R_2]$. 
	\smallbreak
	Finally, having proved the uniqueness and global continuability, the continuous dependence property follows from the standard theory of ODEs (compare again with \cite{ReiWal97}).
\end{proof}

Notice now that, as a consequence of the uniqueness of the solutions to the Cauchy problems proved in Lemma~\ref{uni}, we have that, if $d \in (0,1]$,
$$
(u_d(r),u'_d(r)) \neq (1,0) \quad \mbox{ for every } r \in [R_1,R_2].
$$
Accordingly, we can investigate the behavior of the solution $(u_d,v_d)$ to \eqref{sys1}-\eqref{ci}, by introducing a system of polar-like coordinates around the point $(1,0)$. 
Precisely, we set
\begin{equation}\label{polari}
\begin{cases}
u(r)-1=\rho(r)^\frac{2}{p} \cos_p(\theta(r))\\
v(r)=-\rho(r)^\frac{2}{p'} \sin_p(\theta(r)),
\end{cases}
\end{equation}
where $(\cos_p,\sin_p)$ is the unique solution of
$$
\begin{cases}
x'=-\varphi_{p'}(y),\\
y'=\varphi_p(x),\\
x(0)=1,\;\;y(0)=0.
\end{cases}
$$
These functions were first introduced in \cite{DelPinoElguetaManasevich89} (see also \cite{DelPinoManasevichMurua92}, \cite{FabryFayyad92}). They are called $p$-cosine and $p$-sine functions because they share many properties with the classic cosine and sine functions, as we recall below.

\begin{lemma}[{\cite[Lemma 2.1]{YanZhang10}}] \label{propCpSp}
Let $\pi_p:=\frac{2\pi(p-1)^{1/p}}{p\sin(\pi/p)}$, then
	\begin{enumerate}
		\item[$(i)$] both $\cos_p(\theta)$ and $\sin_p(\theta)$ are $2\pi_p$-periodic;
		\item[$(ii)$] $\cos_p$ is even in $\theta$ and $\sin_p$ is odd in $\theta$;
		\item[$(iii)$] $\cos_p(\theta+\pi_p)=-\cos_p(\theta)$, $\sin_p(\theta+\pi_p)=-\sin_p(\theta)$;
		\item[$(iv)$] $\cos_p(\theta)=0$ if and only if $\theta=\pi_p/2+k\pi_p$, $k\in \Z$, and $\sin_p(\theta)=0$ if and only if $\theta=k\pi_p$, $k\in \Z$;
		\item[$(v)$] $\frac{\textrm{d}}{\textrm{d}\theta}\cos_p(\theta)=-\varphi_{p'}(\sin_p(\theta))$ and $\frac{\textrm{d}}{\textrm{d}\theta}\sin_p(\theta)=\varphi_p(\cos_p(\theta))$;
		\item[$(vi)$] $|\cos_p(\theta)|^p/p+|\sin_p(\theta)|^{p'}/p'\equiv 1/p$.
	\end{enumerate}
\end{lemma}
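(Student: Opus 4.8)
The plan is to construct $(\cos_p,\sin_p)$ explicitly on a first quarter-interval by quadrature, extend it to all of $\R$ using the symmetries of the vector field, and then read off the properties $(i)$--$(vi)$; the only genuinely delicate point will be uniqueness for the Cauchy problem, which is what allows the symmetries of the system to be promoted to symmetries of the solution. I would begin with the conservation law: differentiating and using the system, for any local solution $(x,y)$ one gets
\[
\frac{d}{d\theta}\left(\frac{|x|^p}{p}+\frac{|y|^{p'}}{p'}\right)=\varphi_p(x)\,x'+\varphi_{p'}(y)\,y'=-\varphi_p(x)\varphi_{p'}(y)+\varphi_{p'}(y)\varphi_p(x)=0,
\]
so $E:=|x|^p/p+|y|^{p'}/p'$ is constant along $(\cos_p,\sin_p)$, equal to $E(0)=1/p$; this is exactly $(vi)$, and it also shows the trajectory stays on the compact curve $\mathcal{C}=\{E=1/p\}$, which avoids the origin and meets the coordinate axes only at $(\pm1,0)$ and $(0,\pm(p-1)^{-1/p'})$. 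Property $(v)$ is then nothing but a restatement of the defining system, once we know (from the construction below) that $\cos_p,\sin_p\in C^1(\R)$.

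Next I would build the solution on $[0,\pi_p/2]$. Near $\theta=0$ one has $x>0$ and $y>0$, hence $x'=-\varphi_{p'}(y)<0$, so $x$ is strictly decreasing there; solving $E\equiv1/p$ for $y$ gives $y=\big((1-x^p)/(p-1)\big)^{1/p'}$, and the first equation of the system becomes the separable ODE $x'=-\big((1-x^p)/(p-1)\big)^{1/p}$. Inverting, $\cos_p$ is forced to be the inverse of the strictly decreasing bijection $x\mapsto(p-1)^{1/p}\int_x^1(1-s^p)^{-1/p}\,ds$ from $[0,1]$ onto $[0,\Theta]$, and then $\sin_p:=\big((1-(\cos_p)^{p})/(p-1)\big)^{1/p'}$ on this interval; a check of the one-sided derivatives at $\theta=0$ and at $\theta=\Theta$ (where $\cos_p=0$, $\sin_p=(p-1)^{-1/p'}$) confirms that this pair is a $C^1$ solution of the system on $[0,\Theta]$, strictly monotone in each component. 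The right endpoint is computed by the substitution $t=s^p$ and the reflection formula for $\Gamma$:
\[
\Theta=(p-1)^{1/p}\int_0^1\frac{ds}{(1-s^p)^{1/p}}=\frac{(p-1)^{1/p}}{p}\,B\!\left(\tfrac1p,\,1-\tfrac1p\right)=\frac{\pi\,(p-1)^{1/p}}{p\sin(\pi/p)}=\frac{\pi_p}{2}.
\]
I would then extend $(\cos_p,\sin_p)$ to $[\pi_p/2,\pi_p]$ by setting $(\cos_p,\sin_p)(\theta):=(-\cos_p,\sin_p)(\pi_p-\theta)$, to $[\pi_p,2\pi_p]$ by $(\cos_p,\sin_p)(\theta):=(-\cos_p,-\sin_p)(\theta-\pi_p)$, and by $2\pi_p$-periodicity to $\R$. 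Each new piece again solves the autonomous system --- because it is invariant under $(x,y,\theta)\mapsto(-x,y,-\theta)$ and under $(x,y)\mapsto(-x,-y)$, both maps using only that $\varphi_p,\varphi_{p'}$ are odd --- and the pieces match value and first derivative at the junctions, so the extension is a $C^1$ solution on $\R$; in particular $(\cos_p,\sin_p)(\pi_p)=(-1,0)$. From the explicit description on $[0,\pi_p/2]$ together with the reflection rules one reads off the sign and monotonicity pattern of $\cos_p$ and $\sin_p$ on each quarter-period, which is precisely $(iv)$.

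The remaining properties $(i)$--$(iii)$ follow from uniqueness of the Cauchy problem for the system, and this is the step I expect to be the main obstacle, because the field $(x,y)\mapsto(-\varphi_{p'}(y),\varphi_p(x))$ fails to be Lipschitz on $\{y=0\}$ when $p>2$ and on $\{x=0\}$ when $1<p<2$. Away from the two axes the field is locally Lipschitz, so uniqueness can fail only at the instants when the trajectory crosses an axis; but these crossings are transversal (at $\{y=0\}$ one has $x=\pm1$ and $y'=\varphi_p(\pm1)=\pm1\neq0$, and symmetrically at $\{x=0\}$), so near such an instant one switches to $y$ (respectively $x$) as the independent variable and writes $dx/dy=-\varphi_{p'}(y)/\varphi_p(x)$ (respectively $dy/dx=-\varphi_p(x)/\varphi_{p'}(y)$), whose right-hand side is now Lipschitz in the dependent variable because the denominator is bounded away from $0$ there --- this disposes of both cases $p>2$ and $1<p<2$. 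With uniqueness in hand, $(ii)$ follows because $\theta\mapsto(\cos_p(-\theta),-\sin_p(-\theta))$ solves the system with data $(1,0)$ and hence equals $(\cos_p,\sin_p)$; $(iii)$ follows because $\theta\mapsto(-\cos_p(\theta),-\sin_p(\theta))$ and $\theta\mapsto(\cos_p,\sin_p)(\theta+\pi_p)$ both solve the system with data $(-1,0)$ at $\theta=0$ and hence coincide; and $(i)$ is obtained by applying $(iii)$ twice. All of this is carried out in the references \cite{DelPinoElguetaManasevich89,YanZhang10}, so in the paper it suffices to cite them.
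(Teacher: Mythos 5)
Your proposal is correct, but it is worth pointing out that the paper does not prove this lemma at all: it is stated as a quotation of Lemma~2.1 of the cited reference of Yan and Zhang (with the functions themselves going back to del Pino, Elgueta and Man\'asevich), so there is no internal argument to compare against. What you have written is a sound, self-contained reconstruction of the classical proof: the conservation of $E=|x|^p/p+|y|^{p'}/p'$ gives $(vi)$ and confines the orbit to a compact curve meeting the axes only at $(\pm1,0)$ and $(0,\pm(p-1)^{-1/p'})$; the quadrature on the first quarter-period is right (indeed $x'=-\bigl((1-x^p)/(p-1)\bigr)^{1/p}$ since $(p'-1)/p'=1/p$, and the Beta-function computation correctly yields $\Theta=\pi_p/2$); the reflection and antipodal extensions use only the oddness of $\varphi_p,\varphi_{p'}$ and glue in a $C^1$ fashion; and $(i)$--$(iii)$ then reduce to uniqueness. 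You have also correctly identified where the genuine difficulty sits --- the field is non-Lipschitz on $\{y=0\}$ for $p>2$ and on $\{x=0\}$ for $p<2$ --- and the transversality argument (switching the independent variable to $y$ near $(\pm1,0)$, to $x$ near $(0,\pm(p-1)^{-1/p'})$, where the relevant denominator is bounded away from zero) is exactly how uniqueness on the energy level $\{E=1/p\}$ is restored; note that this only gives uniqueness for data on that curve, but that is all that $(i)$--$(iii)$ require, since every competing solution conserves $E$. In short: your argument is a valid proof and is essentially the one contained in the references the paper cites; what it buys is self-containedness, at the cost of roughly a page that the authors chose to outsource.
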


Via the change of coordinates \eqref{polari}, 
system \eqref{sys1} is transformed into
\begin{equation}\label{sys2}
\left\{
\begin{array}{l}
\displaystyle \rho'(r)=\frac{p}{2\rho(r)}\, u'(r) \, \left[\varphi_p(u(r)-1)-r^{(N-1)p'} \hat f(u(r)) \right] \vspace{0.2cm}  \\
\displaystyle \theta'(r)=\frac{r^{N-1}}{\rho^2(r)}\left[ (p-1)|u'(r)|^p+(u(r)-1) \hat f(u(r)) \right].
\end{array}
\right.
\end{equation}
Moreover, we can write the initial condition \eqref{ci} as
\begin{equation}\label{ci2}
\rho(R_1) = d^{\frac{p}2}, \qquad \theta(R_1) = \pi_p,
\end{equation}
and denote the corresponding solution by $(\rho_d,\theta_d)$. It is then easy to realize that the couple $(\rho_d,\theta_d)$ gives rise to a solution of \eqref{ode2} (and in turn of \eqref{main}) if and only if $\theta_d(R_2)=k\pi_p$ for some $k\in\Z$.

We conclude this preliminary section by noting for further convenience that, as an immediate consequence of Lemma \ref{uni}, \eqref{polari} and Lemma \ref{propCpSp}, we have the following.

\begin{corollary}\label{cor:limit_rho_as_d_to0}
If $(d_n)\subset(0,1)$ is such that $d_n \to d \in (0,1]$, then $(\rho_{d_n}(r),\theta_{d_n}(r)) \to (\rho_d(r),\theta_d(r))$ uniformly in $r \in [R_1,R_2]$. Furthermore, 
\begin{equation}\label{2.14}
\lim_{d\to0} \sup_{r\in [R_1,R_2]} \rho_d(r)=0.
\end{equation}
\end{corollary}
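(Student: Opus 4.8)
The plan is to transport the continuous dependence statement of Lemma~\ref{uni} through the coordinate change \eqref{polari}, and, for the limit $d\to0$, to exploit the uniqueness of the constant solution $u\equiv1$ already established in Lemma~\ref{uni}. The starting point is an explicit formula for $\rho_d$ in terms of $(u_d,v_d)$: squaring the two identities in \eqref{polari} gives $|u_d(r)-1|^p=\rho_d(r)^2|\cos_p(\theta_d(r))|^p$ and $|v_d(r)|^{p'}=\rho_d(r)^2|\sin_p(\theta_d(r))|^{p'}$, so that, by Lemma~\ref{propCpSp}$(vi)$,
$$
\rho_d(r)^2=|u_d(r)-1|^p+\frac{p}{p'}\,|v_d(r)|^{p'}, \qquad r\in[R_1,R_2].
$$
In the same way, away from the point $(1,0)$ the pair $\big(\rho_d(r)^{-2/p}(u_d(r)-1),\,-\rho_d(r)^{-2/p'}v_d(r)\big)$ lies on the $p$-circle $\{|x|^p/p+|y|^{p'}/p'=1/p\}$, which $(\cos_p,\sin_p)$ parametrizes homeomorphically by Lemma~\ref{propCpSp}; hence $(u_d,v_d)$ determines $\theta_d$ modulo $2\pi_p$ in a continuous fashion.

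For the first assertion I would fix $d\in(0,1]$ and a sequence $d_n\to d$. By Lemma~\ref{uni}, $(u_{d_n},v_{d_n})\to(u_d,v_d)$ uniformly on $[R_1,R_2]$; by the uniqueness part of that lemma $(u_d(r),u_d'(r))\neq(1,0)$ for every $r$, so $(u_d(r)-1,v_d(r))\neq(0,0)$ on the compact interval $[R_1,R_2]$ and therefore $\inf_{r}\rho_d(r)>0$. Feeding the uniform convergence of $(u_{d_n},v_{d_n})$ into the displayed formula (and using continuity of $x\mapsto|x|^p$ and of the square root on bounded sets) yields $\rho_{d_n}\to\rho_d$ uniformly; then the continuity of the passage to the angular coordinate described above gives convergence $\theta_{d_n}\to\theta_d$ uniformly modulo $2\pi_p$. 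Finally, since $\theta_{d_n}$ and $\theta_d$ are continuous in $r$ and share the same initial value $\theta_{d_n}(R_1)=\theta_d(R_1)=\pi_p$ by \eqref{ci2}, a standard lifting argument upgrades this to genuine uniform convergence $\theta_{d_n}\to\theta_d$ on $[R_1,R_2]$.

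For \eqref{2.14} I would take an arbitrary sequence $d_n\to0$. By Lemma~\ref{uni}, $(u_{d_n},v_{d_n})\to(u_0,v_0)$ uniformly on $[R_1,R_2]$, where $(u_0,v_0)$ is the solution of \eqref{sys1} with $u(R_1)=1$, $v(R_1)=0$; but the uniqueness established in Lemma~\ref{uni} for the datum $(\bar u,\bar v)=(1,0)$ forces $(u_0,v_0)\equiv(1,0)$. Inserting $(u_{d_n},v_{d_n})\to(1,0)$ into the displayed formula gives $\sup_{[R_1,R_2]}\rho_{d_n}^2\to0$, and since the sequence was arbitrary, \eqref{2.14} follows. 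I do not anticipate a genuine obstacle here: everything reduces to Lemma~\ref{uni} and the elementary algebra of \eqref{polari}. The only point that needs a little care is the branch normalization of $\theta$ in the first assertion --- turning convergence modulo $2\pi_p$ into honest uniform convergence, which is done via the common initial value $\theta_{d_n}(R_1)=\pi_p$ together with continuity in $r$ --- but this is routine.
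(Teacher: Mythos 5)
Your argument is correct and follows exactly the route the paper intends: the paper states this corollary as an ``immediate consequence of Lemma~\ref{uni}, \eqref{polari} and Lemma~\ref{propCpSp}'' without further detail, and your identity $\rho_d^2=|u_d-1|^p+\tfrac{p}{p'}|v_d|^{p'}$ (from Lemma~\ref{propCpSp}$(vi)$), together with the continuous dependence and the uniqueness of the constant solution at $d=0$ from Lemma~\ref{uni}, is precisely the omitted computation. The care you take with the lifting of $\theta$ via the common initial value $\pi_p$ is the right way to make the angular convergence honest.
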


\begin{remark}\label{cambiovariabile}
It is worth noticing that, when $R_1 > 0$ (that is, if $\Omega$ is an annulus), we can perform a change of variables which transfors the equation appearing in \eqref{ode2} into a simpler one. Precisely, for $r\in (R_1,R_2)$,  let
$$
t(r):=\int_{R_1}^r s^{-\frac{N-1}{p-1}} \,ds
=\begin{cases}
\frac{p-1}{p-N}\left(r^\frac{p-N}{p-1}-R_1^\frac{p-N}{p-1}\right) \quad&\text{if } p\neq N\\
\ln\frac{r}{R_1}&\text{if } p= N;
\end{cases}
$$
then, $t(r)$ is invertible with inverse
$$
r(t)=\begin{cases}
\left(\frac{p-N}{p-1}t+R_1^\frac{p-N}{p-1}\right)^\frac{p-1}{p-N} \quad&\text{if } p\neq N\\
R_1e^t &\text{if } p= N.
\end{cases}
$$
Setting
$$
w(t):=u(r(t)), \quad T:=t(R_2), \quad a(t):=r(t)^\frac{p(N-1)}{p-1},
$$
we find that \eqref{ode2} is equivalent to
$$
\begin{cases}
-(\varphi_p(w'))'=a(t)\hat f(w) \quad & \mbox{in } (0,T) \\
w'(0)=w'(T)=0.
\end{cases}
$$
The same procedure can be used in the case $R_1 = 0$ (namely, $\Omega$ is a ball) with $p > N$, since also in this case $r(t)$, and consequently $a(t)$, is well defined for all $t\in[0,T]$. 
However, here we prefer to work always with the boundary value problem \eqref{ode2} in order to produce a common proof for all our results. \end{remark}

\subsection{The associated eigenvalue problem}\label{A}
Consider the eigenvalue problem 
\begin{equation}\label{eq:eigenvalue-Ndim}
\begin{cases}
-\Delta_p \phi = \lambda|\phi|^{p-2}\phi\quad&\mbox{in }\Omega\\
\partial_\nu \phi=0&\mbox{on }\partial\Omega,
\end{cases}
\end{equation}
where $\Omega$ is one of the two radial open domains defined in the introduction, and $\lambda \in \mathbb{R}$. Since we are interested only in the radial eigenvalues of \eqref{eq:eigenvalue-Ndim}, we can rewrite \eqref{eq:eigenvalue-Ndim} as the following 1-dimensional eigenvalue problem
\begin{equation}\label{eq:eigenvalue}
\begin{cases}
-(r^{N-1}\varphi_p(\phi'))'=\lambda r^{N-1} \varphi_p(\phi) \quad & \mbox{in } (R_1,R_2) \\
\phi'(R_1)=\phi'(R_2)=0.
\end{cases}
\end{equation}
 
The following result is well-known. 

\begin{theorem}[Theorem 1 of \cite{RW99}]\label{th:eigen}
The eigenvalue problem \eqref{eq:eigenvalue} has a countable number of simple eigenvalues $0=\lambda_1<\lambda_2<\lambda_3<\dots$, $\lim_{k\to\infty}\lambda_k=+\infty$, and no other eigenvalues. The eigenfunction $\phi_k$ that corresponds to the $k$-th eigenvalue $\lambda_k$ has $k-1$ simple zeros in $(R_1,R_2)$.  
%Between 0 and the first zero of $v_k$, between any two consecutive zeros of $v_k$ and also between the last zero of $v_k$ and $T$ there is exactly one zero of $v_{k+1}$.  
\end{theorem}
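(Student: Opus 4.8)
Since this is precisely Theorem~1 of \cite{RW99}, I only sketch a proof, carried out via the shooting method so as to be consistent with the rest of the paper (and because it rehearses ideas used in Sections~\ref{sec:theo1}--\ref{sec:theo2}). The plan has three steps: (i) recast \eqref{eq:eigenvalue} as a shooting problem; (ii) pass to generalized Pr\"ufer coordinates and read off the rotation/zero count; (iii) locate the eigenvalues through the monotone dependence on $\lambda$. For step (i), I would observe that the equation in \eqref{eq:eigenvalue} is the special case of the one treated in Lemmas~\ref{max}--\ref{uni} obtained by replacing $\hat f$ with $s\mapsto-\lambda\varphi_p(s)$: assumption $(f_0)$ then holds trivially, while for $\lambda\neq0$ the value $u=1$ becomes a \emph{regular} point, since $-\lambda\varphi_p(1)=-\lambda\neq0$. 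Hence the same uniqueness, global continuability and continuous dependence (now also with respect to $\lambda$) carry over verbatim. By uniqueness a nontrivial eigenfunction cannot have a double zero, so $\phi(R_1)\neq0$ and I may normalize $\phi(R_1)=1$; writing $\phi_\lambda$ for the solution of \eqref{eq:eigenvalue} with $\phi(R_1)=1$, $\phi'(R_1)=0$ (the singular Cauchy problem of Section~\ref{preliminary}, when $R_1=0$), $\lambda$ is an eigenvalue precisely when $\phi'_\lambda(R_2)=0$; since $\phi_\lambda$ is unique up to a nonzero factor, every eigenvalue is simple, and every zero of $\phi_\lambda$ is simple as well.

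For step (ii), as in \eqref{polari}--\eqref{sys2} but centering the polar-like coordinates at the origin of the phase plane, I would set $\phi=\rho^{2/p}\cos_p\theta$ and $r^{N-1}\varphi_p(\phi')=-\lambda^{1/p'}r^{N-1}\rho^{2/p'}\sin_p\theta$, the amplitude scaling by $\lambda^{1/p'}$ being tuned so that, using Lemma~\ref{propCpSp}$(v)$--$(vi)$, the angular equation collapses to $\theta'(r)=\lambda^{1/p}-\frac{N-1}{r}\sin_p(\theta(r))\cos_p(\theta(r))$, with the normalization $\theta_\lambda(R_1)=0$ (the singular term stays bounded as $r\to0$, since there $|\sin_p\theta_\lambda(r)|=O(r)$). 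The decisive point is that the perturbation vanishes whenever $\theta\in\frac{\pi_p}{2}\Z$, so that $\theta'_\lambda>0$ at every such point; consequently $\theta_\lambda$ starts at $0$, stays nonnegative, and crosses each half-integer multiple of $\pi_p$ at most once, always with positive speed. By Lemma~\ref{propCpSp}$(iv)$, the zeros of $\phi_\lambda$ are the $r$ with $\theta_\lambda(r)\in\frac{\pi_p}{2}+\pi_p\Z$, while $\phi'_\lambda$ vanishes where $\theta_\lambda\in\pi_p\Z$; hence $\lambda>0$ is an eigenvalue if and only if $\theta_\lambda(R_2)=k\pi_p$ for some integer $k\geq1$, and then (by the intermediate value theorem together with the ``at most once'' property) $\phi_\lambda$ has exactly $k$ zeros in $(R_1,R_2)$.

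For step (iii), it remains to solve $\theta_\lambda(R_2)=k\pi_p$. The map $\lambda\mapsto\theta_\lambda(R_2)$ is continuous (continuous dependence) and strictly increasing on $[0,\infty)$: if $0\le\lambda<\mu$ then $\theta_\mu-\theta_\lambda$ vanishes at $R_1$ and, at any point where it vanishes, has derivative $\mu^{1/p}-\lambda^{1/p}>0$, so it is positive on $(R_1,R_2]$. Moreover $\theta_0(R_2)=0$ (for $\lambda=0$ the only solution is $\phi_0\equiv1$) and $\theta_\lambda(R_2)\to+\infty$ as $\lambda\to+\infty$ (from $\theta'_\lambda\ge\lambda^{1/p}-c_\Omega$), while no $\lambda<0$ can be an eigenvalue, since multiplying the equation by $\phi$ and integrating by parts gives $\int_{R_1}^{R_2}r^{N-1}|\phi'|^p\,dr=\lambda\int_{R_1}^{R_2}r^{N-1}|\phi|^p\,dr$. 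I therefore conclude that the eigenvalues are exactly $\lambda_1=0$ and, for each $k\ge1$, the unique $\lambda_{k+1}>0$ with $\theta_{\lambda_{k+1}}(R_2)=k\pi_p$; they form a strictly increasing sequence diverging to $+\infty$, each is simple, and $\phi_{\lambda_k}$ has exactly $k-1$ simple zeros in $(R_1,R_2)$. The one genuinely delicate point is the non-autonomous term $-\frac{N-1}{r}\sin_p\theta\cos_p\theta$ in the angular equation (absent for $N=1$): the observation that makes both the rotation count and the strict monotonicity in $\lambda$ work is precisely that this term vanishes on $\frac{\pi_p}{2}\Z$, so that $\theta_\lambda$ crosses the relevant threshold values transversally.
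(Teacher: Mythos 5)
The paper does not actually prove this statement: it is imported verbatim as Theorem~1 of \cite{RW99}, and the Pr\"ufer-type machinery the paper builds around it (the coordinates \eqref{polar-eigenprobl} and the characterization \eqref{initialcondition-ep}) is stated as a consequence, not as a proof. Your sketch is therefore necessarily a different route from ``the paper's proof''; what it is, is a correct reconstruction of the classical modified-Pr\"ufer/rotation-number argument, which is essentially how \cite{RW99} itself proceeds and is fully consistent with the shooting framework of Section~\ref{preliminary}. I checked the key computation: with your normalization $\phi=\rho^{2/p}\cos_p\theta$, $\varphi_p(\phi')=-\lambda^{1/p'}\rho^{2/p'}\sin_p\theta$, the angular equation is indeed $\theta'=\lambda^{1/p}-\frac{N-1}{r}\sin_p(\theta)\cos_p(\theta)$ (use Lemma~\ref{propCpSp}$(v)$--$(vi)$ and $p/p'=p-1$), and the two observations that drive everything --- the perturbation vanishes on $\frac{\pi_p}{2}\Z$, and it cancels in $\theta_\mu'-\theta_\lambda'$ at any coincidence point --- are exactly right and give both the sharp zero count and the strict monotonicity in $\lambda$.

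Three small repairs are needed, none fatal. First, matching \eqref{eq:eigenvalue} against \eqref{ode2} requires $\hat f(s)=\lambda\varphi_p(s)$, not $-\lambda\varphi_p(s)$; this sign slip does not affect anything you do with it. Second, your bound $\theta'_\lambda\ge\lambda^{1/p}-c_\Omega$ has no finite constant $c_\Omega$ when $\Omega$ is a ball, since $\frac{N-1}{r}$ blows up at $r=0$; to get $\theta_\lambda(R_2)\to+\infty$ as $\lambda\to+\infty$, integrate only over $[R_2/2,R_2]$ and use that $\theta_\lambda\ge 0$ there. Third, to conclude that there are ``no other eigenvalues'' and that \emph{every} level $k\pi_p$, $k\ge1$, is attained, you need $\lim_{\lambda\to0^+}\theta_\lambda(R_2)<\pi_p$; this does not follow from setting $\theta_0\equiv0$ (your transformation degenerates at $\lambda=0$ because of the factor $\lambda^{1/p'}$), but it does follow from noting that $\sin_p\theta\cos_p\theta\ge0$ for $\theta\in[0,\pi_p/2]$, whence $\theta'_\lambda\le\lambda^{1/p}$ on that range and $\theta_\lambda(R_2)\le\pi_p/2$ for $\lambda$ small. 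With these additions the sketch is a complete and correct proof.
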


We remark that for every $1\le k\in\mathbb N$, if we denote by $\lambda_k$ the $k$-th eigenvalue of \eqref{eq:eigenvalue} and by $\lambda_{k}^{\mathrm{rad}}$  the $k$-th radial eigenvalue of \eqref{eq:eigenvalue-Ndim},
$$
\lambda_k=\lambda_k^{\mathrm{rad}}.
$$ 
Following Sturm's theory, we are now going to clarify the relationship between the eigenvalues $\lambda_k^{\mathrm{rad}}$ and an angular coordinate $\vartheta$ analogous to the one defined in the previous section. Accordingly, we consider the change of variables 
\begin{equation}\label{polar-eigenprobl}
\begin{cases}
\phi(r)=\varrho_\lambda(r)^{\frac{2}{p}}\cos_p(\vartheta_\lambda(r))\\
r^{N-1}\varphi_p(\phi'(r))=-\varrho_\lambda(r)^{\frac{2}{p'}}\sin_p(\vartheta_\lambda(r)),
\end{cases}
\end{equation}
where the functions $\sin_p$, $\cos_p$ are defined as in the previous section. Then, the eigenvalue problem \eqref{eq:eigenvalue} reads as 
$$
\begin{cases}
\displaystyle \varrho_\lambda'(r)=\frac{p}{2\varrho(r)}\left(1-\lambda r^{(N-1)p'}\right)\varphi_p(\phi(r))\phi'(r),
\vspace{0.2cm}\\
\displaystyle \vartheta_\lambda'(r)=\frac{r^{N-1}}{\varrho(r)^2}\left[(p-1)|\phi'(r)|^p+\lambda |\phi(r)|^p\right],
\end{cases}
$$
with boundary conditions 
$$
\vartheta_\lambda(R_1)=\pi_p\quad\mbox{and}\quad \vartheta_\lambda(R_2)=j\pi_p
$$ 
for some $j\in\mathbb N$. Notice that the function $r \mapsto \vartheta_\lambda(r)$ is strictly increasing. As a consequence, if $\lambda=\lambda_k$ for $k\ge1$, the fact that $\phi_k$ has $k-1$ simple zeros in $(R_1,R_2)$ reads as 
\begin{equation}\label{initialcondition-ep}
\vartheta_{\lambda_k}(R_1)=\pi_p\quad\mbox{and}\quad \vartheta_{\lambda_k}(R_2)=k\pi_p.
\end{equation}
For further convenience, we also observe that, by \eqref{polar-eigenprobl}, 
$$
r^{(N-1)p'}|\phi'|^p=\varrho_\lambda^2|\sin_p(\vartheta_\lambda)|^{p'}\quad\mbox{ and }\quad|\phi|^p=\varrho_\lambda^2|\cos_p(\vartheta_\lambda)|^p,
$$
so that
\begin{equation}\label{eq:theta'-omogenea-associata}
\vartheta_\lambda' =r^{N-1}\left[\frac{p-1}{r^{(N-1)p'}}|\sin_p(\vartheta_\lambda)|^{p'}+\lambda|\cos_p(\vartheta_\lambda)|^p\right].
\end{equation}

\subsection{The proof of Theorem \ref{th:main}} \label{sec:theo1}
\begin{proof}
By $(f_1)$, we know that for all $n\in\mathbb N$ there exists $\delta=\delta(n)>0$ such that for every $s$ satisfying $|s-1|<\delta$ it holds
$$
\hat f(s)(s-1)=f(s)(s-1)>
\begin{cases}
\left(C_1-\dfrac{1}{n}\right)|s-1|^p&\mbox{ if }C_1\in(0,\infty),
\vspace{0.1cm}\\
n|s-1|^p&\mbox{if } C_1=+\infty.
\end{cases}
$$
Then, by \eqref{sys2}, we get that if $|u(r)-1|<\delta(n)$
\begin{equation}\label{eq:disug-theta'}
\theta'(r)>
\begin{cases}
\displaystyle\frac{r^{N-1}}{\rho(r)^2}\left[(p-1)|u'(r)|^p+\left(C_1-\frac1n\right)|u(r)-1|^p\right] 
&\mbox{ if }C_1\in(0,\infty),
\vspace{0.2cm} \\
\displaystyle \frac{r^{N-1}}{\rho(r)^2}\left[(p-1)|u'(r)|^p+n|u(r)-1|^p\right] 
\quad&\mbox{if }C_1=+\infty.
\end{cases}
\end{equation}
Furthermore, we deduce from \eqref{polari} that
$$
r^{(N-1)p'}|u'|^p=\rho^2|\sin_p(\theta)|^{p'} \quad\text{and}\quad |u-1|^p=\rho^2|\cos_p(\theta)|^p.
$$
Hence, combining the latter equalities with \eqref{eq:disug-theta'}, we obtain by \eqref{2.14} that for every $n$ there exists $\delta'=\delta'(n)>0$ such that for all $d\in (0,\delta')$
\begin{equation}\label{eq:disug-theta'1}
\theta_d'(r)>
\begin{cases}
\displaystyle  r^{N-1}\left[\frac{p-1}{r^{(N-1)p'}}|\sin_p(\theta_d(r))|^{p'}+\!\left(\!C_1\!-\!\frac1n\!\right)\!|\cos_p(\theta_d(r))|^p\right]&\mbox{if }C_1\in(0,\infty),
\vspace{0.2cm}\\
\displaystyle r^{N-1}\left[\frac{p-1}{r^{(N-1)p'}}|\sin_p(\theta_d(r))|^{p'}+n|\cos_p(\theta_d(r))|^p\right]&\mbox{if }C_1=+\infty
\end{cases}
\end{equation}
for all $r\in[R_1,R_2]$.

Now, in both cases (i.e., $0<C_1<+\infty$ and $C_1=+\infty$), let $k\ge1$ be an integer such that $C_1>\lambda_{k+1}^{\mathrm{rad}}$. For $n$ so large that
$$
n>\lambda_{k+1}\quad\mbox{ and }\quad C_1-\frac1n>\lambda_{k+1},
$$ 
relation \eqref{eq:disug-theta'1} becomes
\[
\theta_d'(r)> r^{N-1}\left[\frac{p-1}{r^{(N-1)p'}}|\sin_p(\theta_d(r))|^{p'}+ \lambda_{k+1} |\cos_p(\theta_d(r))|^p\right]
\]

Hence,
recalling \eqref{eq:theta'-omogenea-associata} with $\lambda=\lambda_{k+1}$ and using the Comparison Theorem for ODEs, we obtain for $d$ small enough
\begin{equation}\label{eq:theta_d_lambda}
\theta_d(r)> \vartheta_{\lambda_{k+1}}(r)\quad\mbox{for all }r\in(R_1,R_2].
\end{equation}
In particular, by \eqref{initialcondition-ep}
$$\theta_d(R_2)> (k+1)\pi_p$$
for $d$ sufficiently close to 0.
Since $\theta_1(R_2)=\pi_p$, by the continuity of the map $d \mapsto \theta_d(R_2)$ (see Corollary \ref{cor:limit_rho_as_d_to0}), we have that for all $j=1,\dots,k$ there exists $d_j\in(0,1)$ for which $\theta_{d_j}(R_2)=(j+1)\pi_p$. This corresponds to $u_{d_j}'(R_2)=0$, providing the desired solution $u_j$ of \eqref{main}.

In order to prove the oscillatory behavior of $u_j$ it suffices to remark that, since $\theta_{d_j}(r)$ is monotone increasing (see \eqref{sys2} and recall $(f_\textrm{eq})$), there exist exactly $j$ radii $r_1,\ldots,r_{j}\in (R_1,R_2)$ such that $\theta_{d_j}(r_1)=\frac{3}{2}\pi_p$, $\theta_{d_j}(r_2)=\frac{5}{2}\pi_p,\ldots,\theta_{d_j}(r_{j})=\left(j+\frac{1}{2}\right)\pi_p$.
\end{proof}

\begin{remark}\label{conv} Let $\vartheta_{C_1}$ be the angular coordinate defined in \eqref{polar-eigenprobl} with $\lambda=C_1$.
In the case $C_1<\infty$, we can show that $\theta_d\to\vartheta_{C_1}$ uniformly in $[R_1,R_2]$ for $d \to 0^+$ (in particular, this holds true also for $C_1=0$), thus obtaining a stronger relation than \eqref{eq:theta_d_lambda}. Indeed, by \eqref{sys2} and $(f_1)$, we have as $d\to0$
$$
\theta_d'=(p-1)r^{(N-1)(1-p')}|\sin_p(\theta_d)|^{p'}+C_1 r^{N-1} |\cos_p(\theta_d)|^p+r^{N-1}\frac{o(\rho_d^2|\cos_p(\theta_d)|^p)}{\rho_d^2}.
$$
Now, Corollary \ref{cor:limit_rho_as_d_to0} and $|\cos_p(\theta_d)|^p=O(1)$ provide $o(\rho_d^2|\cos_p(\theta_d)|^p)/\rho_d^2=o(1)$ as $d\to 0^+$. 
On the other hand, by \eqref{eq:theta'-omogenea-associata},
$$
\vartheta_{C_1}'=(p-1)r^{(N-1)(1-p')}|\sin_p(\vartheta_{C_1})|^{p'}+C_1 r^{N-1} |\cos_p(\vartheta_{C_1})|^p,
$$
whence 
$$
|\theta_d'-\vartheta_{C_1}'|\le \mathcal L|\theta_d-\vartheta_{C_1}|+o(1)\quad\mbox{as }d\to0^+,
$$
with $\mathcal L=\mathcal L(p,C_1,\sin_p,\cos_p)>0$ being related to the Lipschitz constants of $|\sin_p(\cdot)|^{p'}$ and $|\cos_p(\cdot)|^p$. Therefore, for all $\varepsilon>0$ and all $r\in[R_1,R_2]$,
$$
|\theta_d(r)-\vartheta_{C_1}(r)|\le \mathcal L\int_{R_1}^r|\theta_d(s)-\vartheta_{C_1}(s)|ds+\varepsilon \quad\mbox{for $d$ sufficiently small.}
$$
By Gronwall's inequality, for all $r\in[R_1,R_2]$
$$
|\theta_d(r)-\vartheta_{C_1}(r)|\le \varepsilon e^{\mathcal L (r-R_1)}\quad\mbox{for $d$ sufficiently small}
$$
and so, by the arbitrariness of $\varepsilon>0$, 
$$
|\theta_d-\vartheta_{C_1}| \to 0 \quad\mbox{uniformly in }[R_1,R_2] \mbox{ as } d \to 0^+.
$$
\end{remark}

\subsection{The proof of Theorem \ref{th:main2}}\label{sec:theo2}
We will adapt an argument introduced in \cite{BosZan13}, and make use of the following result proved therein.

\begin{lemma}[{\cite[Corollary 5.1]{BosZan13}}]\label{lem:BosZan}
Let us consider the system
\begin{equation}\label{sys4}
\left\{
\begin{array}{l}
\rho'= \mathrm{P}(r,\rho,\theta)  \\
\displaystyle \theta'=\Theta(r,\rho,\theta),
\end{array}
\right.
\end{equation}
being $\mathrm{P},\Theta:[r_1,r_2]\times\R_0^+\times[\theta_1,\theta_2]\to\R$ continuous functions. Suppose that the uniqueness for the Cauchy problem associated with \eqref{sys4} is ensured and let $\gamma:[\theta_1,\theta_2]\to\R$ be a function of class $C^1$, with $\gamma(\theta)>0$ for every $\theta\in [\theta_1,\theta_2]$. Assume
$$
\mathrm{P}(r,\gamma(\theta),\theta) \leq \gamma'(\theta)\Theta(r,\gamma(\theta),\theta) 
\quad \text{for every } r\in[r_1,r_2],\ \theta\in [\theta_1,\theta_2].
$$
Then for every $(\rho,\theta):I\to \R_0^+\times[\theta_1,\theta_2]$ solution to \eqref{sys4} (being $I\subset [r_1,r_2]$ an interval) and $r_0\in I$,
\[
\rho(r_0)\leq \gamma(\theta(r_0)) \quad \Longrightarrow\quad \rho(r)\leq\gamma(\theta(r))\text{ for every } r\in (r_0,+\infty)\cap I.
\]
\end{lemma}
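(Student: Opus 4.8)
The plan is to read the hypothesis
$$
\mathrm{P}(r,\gamma(\theta),\theta)\le\gamma'(\theta)\,\Theta(r,\gamma(\theta),\theta)
$$
as a Nagumo-type boundary condition for the closed region
$$
K:=\{(\rho,\theta)\in\R_0^+\times[\theta_1,\theta_2]\ :\ \rho\le\gamma(\theta)\}
$$
under the flow of \eqref{sys4}: it says exactly that along the graph $\{\rho=\gamma(\theta)\}$ the field $(\mathrm{P},\Theta)$ has nonpositive component in the direction of the outer normal $(1,-\gamma'(\theta))$ of $K$, i.e.\ it points into $K$ or is tangent to its boundary. Since every solution considered in the statement takes values in $\R_0^+\times[\theta_1,\theta_2]$, the only portion of $\partial K$ it can reach from inside is precisely that graph; one therefore expects $K$ to be positively invariant, which is exactly the assertion.

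To make this quantitative I would fix a solution $(\rho,\theta):I\to\R_0^+\times[\theta_1,\theta_2]$ of \eqref{sys4} and introduce the scalar function $W(r):=\rho(r)-\gamma(\theta(r))$. Since $\mathrm{P}$ and $\Theta$ are continuous, every solution of \eqref{sys4} is of class $C^1$; as $\gamma\in C^1$, also $W\in C^1(I)$ and
$$
W'(r)=\mathrm{P}(r,\rho(r),\theta(r))-\gamma'(\theta(r))\,\Theta(r,\rho(r),\theta(r)).
$$
The crucial remark is that at every zero $r$ of $W$, i.e.\ whenever $\rho(r)=\gamma(\theta(r))$, the hypothesis gives at once
$$
W'(r)=\mathrm{P}(r,\gamma(\theta(r)),\theta(r))-\gamma'(\theta(r))\,\Theta(r,\gamma(\theta(r)),\theta(r))\le 0.
$$
Starting from $W(r_0)\le 0$, I would then argue by contradiction: if $W(r_1)>0$ for some $r_1\in(r_0,+\infty)\cap I$, set $\bar r:=\sup\{r\in[r_0,r_1]:W(r)\le 0\}$; by continuity $W(\bar r)=0$, $\bar r<r_1$, and $W>0$ on $(\bar r,r_1]$.

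It remains to derive a contradiction from ``$W(\bar r)=0$, $W'(r)\le 0$ at the zeros of $W$, and $W>0$ on $(\bar r,r_1]$'', and this is the genuinely delicate point: for an arbitrary $C^1$ function these facts are perfectly compatible (e.g.\ $W(r)=(r-\bar r)^3$), so one must exploit the differential equation itself. When $\mathrm{P}$ and $\Theta$ are locally Lipschitz in $(\rho,\theta)$ near the relevant arc of the barrier the conclusion is routine: subtracting the inequality evaluated at $(\gamma(\theta(r)),\theta(r))$ and using the Lipschitz bounds together with $\sup_{[\theta_1,\theta_2]}|\gamma'|<\infty$ yields, on $[\bar r,r_1]$, a linear differential inequality $W'(r)\le L\,W(r)$ with $W\ge 0$ and $W(\bar r)=0$, whence $W\equiv 0$ there by Gronwall's lemma, contradicting $W(r_1)>0$. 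For a merely continuous field this last step breaks down, and this is exactly where the standing uniqueness hypothesis is needed: the assertion is then an instance of the classical Nagumo flow-invariance principle for closed sets under a continuous vector field with uniquely solvable Cauchy problems. Reproving that principle in full generality would be the main obstacle; instead I would invoke it, its adaptation to the present phase-plane setting being the one carried out in \cite{BosZan13}.
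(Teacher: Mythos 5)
The paper does not prove this lemma at all: it is imported, with its proof, from \cite[Corollary 5.1]{BosZan13}, so there is no internal argument to compare yours against. That said, your sketch is a faithful skeleton of the proof in the cited reference: the hypothesis is indeed a subtangentiality condition on the graph of $\gamma$, the first-crossing set-up with $W=\rho-\gamma\circ\theta$ and $\bar r:=\sup\{r\in[r_0,r_1]:W(r)\le 0\}$ is the right one, and you correctly isolate the only delicate point, namely that $W'\le 0$ at the zeros of $W$ does not by itself prevent $W$ from becoming positive. Your two ways out are both legitimate, and uniqueness enters exactly where you say it does. For completeness, the standard elementary way to close the gap without invoking the abstract Nagumo invariance theorem is an $\varepsilon$-perturbation: replace $\mathrm{P}$ by $\mathrm{P}-\varepsilon$ so that the barrier inequality becomes strict; then at a first crossing one gets $W_\varepsilon'(\bar r)<0$, forcing $W_\varepsilon<0$ immediately to the right of $\bar r$ and contradicting the definition of $\bar r$; finally let $\varepsilon\to 0$, using that for continuous fields uniqueness implies continuous dependence on the right-hand side. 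So your proposal is correct in substance, and it is incomplete only to the same extent as the paper's own treatment, since both ultimately rest on the cited reference.
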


For the proof of Theorem \ref{th:main2}, it is convenient to write the equation in \eqref{ode2} as the planar system in $(R_1,R_2)$
\begin{equation}\label{sys3}
u' = \varphi_p^{-1}\left( \left(\frac{R_2}{r}\right)^{N-1} v\right), \qquad
v' = - \left(\frac{r}{R_2}\right)^{N-1} \hat f(u).
\end{equation}
The advantage of this new scaling is that the maximum of the weight $(r/R_2)^{N-1}$ in $[R_1,R_2]$ is independent of $R_2$, a property that will be useful in the sequel. While, concerning the minimum of the same weight, we will use the fact that it is positive in $[\varepsilon R_2,R_2]$ for any $\varepsilon>0$.
Comparing \eqref{sys3} with \eqref{sys1}, it is immediately realized that all the properties discussed in Section \ref{preliminary} still hold true for this slightly different planar formulation of \eqref{ode2}. In particular, we define  $(u_d,v_d)$ as the solution of \eqref{sys3} satisfying $(u_d(R_1),v_d(R_1)) = (1-d,0)$ and we pass to polar-like coordinates around the point $(1,0)$ as in \eqref{polari}, that is,
$$
\begin{cases}
x(r) := u(r)-1=\rho(r)^\frac{2}{p} \cos_p(\theta(r))\\
y(r) := v(r)=-\rho(r)^\frac{2}{p'} \sin_p(\theta(r)).
\end{cases}
$$
We thus obtain (compare with \eqref{sys2}) the system
\begin{equation}\label{sys4a}
\left\{
\begin{array}{l}
\displaystyle \rho'= \frac{p}{2\rho}\, \left(\frac{R_2}{r} \right)^{(N-1)(p'-1)} \varphi_{p'}(y) \, \left[\varphi_p(x)-\left(\frac{r}{R_2}\right)^{(N-1)p'} \hat f(x+1) \right] =: \mathrm{P}(r,\rho,\theta)
  \\
\displaystyle \theta'= \frac{1}{\rho^2} \left(\frac{R_2}{r} \right)^{(N-1)(p'-1)}\left[ (p-1) \vert y \vert^{p'}+\left(\frac{r}{R_2}\right)^{(N-1)p'} \hat f(x+1)x \right] =: \Theta(r,\rho,\theta),
\end{array}
\right.
\end{equation}
with initial conditions \eqref{ci2}.
We also write
$$
\mathrm{P}(r,\rho,\theta) =: \rho S\left(r,\rho^{\tfrac{2}{p}}\cos_p(\theta),
	-\rho^{\tfrac{2}{p'}}\sin_p(\theta)\right)
$$
and
$$
\Theta(r,\rho,\theta) =: U\left(r,\rho^{\tfrac{2}{p}}\cos_p(\theta),
-\rho^{\tfrac{2}{p'}}\sin_p(\theta)\right),
$$
where, noting that $\vert x \vert^p + (p-1) \vert y \vert^{p'}=\rho^2$,
$$
S(r,x,y) = \frac{p}{2}\, \left(\frac{R_2}{r} \right)^{(N-1)(p'-1)} \cdot \frac{ \varphi_{p'}(y) \, \left[\varphi_p(x)-\left(\dfrac{r}{R_2}\right)^{(N-1)p'} \hat f(x+1) \right]}{\vert x \vert^p + (p-1) \vert y \vert^{p'}}
$$
and
$$
U(r,x,y) = \left(\frac{R_2}{r} \right)^{(N-1)(p'-1)} \cdot \frac{\left[ (p-1) \vert y \vert^{p'}+\left(\dfrac{r}{R_2}\right)^{(N-1)p'} \hat f(x+1)x \right]}{\vert x \vert^p + (p-1) \vert y \vert^{p'}}.
$$

\begin{proof}[$\bullet$ Proof of Theorem \ref{th:main2}] We treat the two cases $\Omega=\mathcal B(R_2)$ and $\Omega=\mathcal A(R_1,R_2)$ simultaneously, by taking into account that the condition $R_1<\varepsilon R_2$ is trivially verified for all $\varepsilon>0$ when $R_1=0$, that is in the case of the ball. Hence, if $\Omega=\mathcal B(R_2)$, for any $k\ge1$ we can fix any $\varepsilon>0$ and consider $R_*$ only depending on $k$.  

For $d\in [0,1]$ let $(\rho_d,\theta_d)$ be the solution of \eqref{sys4a} with initial conditions \eqref{ci2}.
The key point is to show that for any integer $k \geq 1$ and any $\varepsilon > 0$,
there exists $R_*(k,\varepsilon) > 0$ such that for $R_1 < \varepsilon R_2$ and $R_2 > R_*(k,\varepsilon)$ there exists $d_k \in (0,1)$ such that 
$\theta_{d_k}(R_2) > (k+1)\pi_p$. From this, one can easily conclude. Indeed, on one hand $\theta_1(R_2) = \pi_p$. On the other hand, $\theta_d(R_2) < 2\pi_p$ for $d$ small enough, since by Remark \ref{conv} it holds $\theta_d(R_2) \to \vartheta_0(R_2)$ for $d \to 0$ and $\vartheta_0(r) \equiv \pi_p$. Then, by continuity, it is possible to find for any $j=1,\ldots,k$ two values 
$$
0 < d_j^- < d_k < d_j^+ < 1 
$$ 
such that $\theta_{d_j^\pm}(R_2) = (j+1)\pi_p$, giving rise to the desired solutions $u^\pm_j$. The oscillatory behavior is then proved as in Theorem \ref{main}. In fact, by \eqref{sys4a} $\theta_{d_j^\pm}$ is increasing for every $j=1,\dots, k$, and consequently, there exist exactly $2j$ radii  $r_1^-,\ldots,r^-_{j}, r_1^+,\ldots,r^+_{j}\in (R_1,R_2)$ such that $\theta_{d^\pm_j}(r^\pm_1)=\frac{3}{2}\pi_p$, $\theta_{d^\pm_j}(r^\pm_2)=\frac{5}{2}\pi_p$,$\dots,$ $\theta_{d^\pm_j}(r^\pm_{j})=\left(j+\frac{1}{2}\right)\pi_p$.

From now on, we thus focus on the proof of the above claim; this requires, however, several auxiliary definitions. First of all, we set
$$
\mathcal{M}_-(x,y) := \begin{cases}
\displaystyle{\frac{p}{2}\cdot \frac{\varphi_{p'}(y)\left( \varphi_p(x) - \hat f(x+1)\right)}{(p-1)\vert y \vert^{p'} + \hat f(x+1)x}}  &\text{if } xy \geq 0, \vspace{0.2cm} \\
\displaystyle{\frac{p}{2}\cdot \frac{\varphi_{p'}(y)\left( \varphi_p(x) - \varepsilon^{(N-1)p'} \hat f(x+1)\right)}{(p-1)\vert y \vert^{p'} + \varepsilon^{(N-1)p'} \hat f(x+1)x}} &\text{if } xy \leq 0,
\end{cases} 
$$ 
and
$$
\mathcal{M}_+(x,y) := \begin{cases}
\displaystyle{\frac{p}{2}\cdot \frac{\varphi_{p'}(y)\left( \varphi_p(x) - \varepsilon^{(N-1)p'} \hat f(x+1)\right)}{(p-1)\vert y \vert^{p'} + \varepsilon^{(N-1)p'} \hat f(x+1)x}} &\text{if } xy \geq 0, \vspace{0.2cm}\\
\displaystyle{\frac{p}{2}\cdot \frac{\varphi_{p'}(y)\left( \varphi_p(x) - \hat f(x+1)\right)}{(p-1)\vert y \vert^{p'} + \hat f(x+1)x}} &\text{if } xy \leq 0.
\end{cases} 
$$ 
A straightforward calculation shows that 
\begin{equation}\label{msu}
\mathcal{M}_-(x,y) \leq \frac{S(r,x,y)}{U(r,x,y)} \leq \mathcal{M}_+(x,y)
\end{equation}
for all $r \in [\varepsilon R_2,R_2]$ and all $(x,y) \in \mathbb{R}^2 \setminus \{(0,0)\}$. For example, the inequality $\mathcal{M}_- \leq S/U$ is equivalent to
\[
\varphi_{p'}(y) \hat f(x+1) \rho^2 \left(\frac{r}{R_2}\right)^{(N-1)p'} \leq \varphi_{p'}(y) \hat f(x+1) \rho^2, \quad \text{if } xy\geq0,
\]
 and to
\[
\varphi_{p'}(y) \hat f(x+1) \rho^2 \left(\frac{r}{R_2}\right)^{(N-1)p'} \leq \varphi_{p'}(y) \hat f(x+1) \rho^2 \varepsilon^{(N-1)p'}, \quad \text{if } xy\leq0,
\]
from which we see that the first inequality in \eqref{msu} is satisfied for $r \in [\varepsilon R_2,R_2], (x,y) \in \mathbb{R}^2 \setminus \{(0,0)\}$. The proof of the second inequality in \eqref{msu} is similar.

Then, we define $\rho_\pm(\theta;\bar\theta,\bar\rho)$ as the solution of 
\begin{equation}\label{eqm}
\begin{cases}
\frac{d\rho}{d\theta} = \rho \mathcal{M}_\pm \left(\rho^{2/p}\cos_p(\theta),-\rho^{2/p'}\sin_p(\theta)\right)\\
\rho_\pm(\bar\theta;\bar\rho,\bar\theta) = \bar\rho
\end{cases}
\end{equation}
and we set
for any $\bar\rho > 0$
$$
m_k(\bar\rho) := \inf_{\bar\theta \in [0,2\pi_p),\;\theta \in [\bar\theta, \bar\theta + k \pi_p]} \rho_-(\theta;\bar\rho,\bar\theta),
$$
$$
M_k(\bar\rho) := \sup_{\bar\theta \in [0,2\pi_p), \; \theta \in [\bar\theta, \bar\theta + k \pi_p]} \rho_+(\theta;\bar\rho,\bar\theta).
$$
By continuous dependence, we can choose $0 < \check{\rho}_k < \rho^*_k < \hat \rho_k$ such that
\begin{equation}\label{choice}
0 < \check{\rho}_k < m_k(\rho^*_k) \leq \rho_k^* \leq M_k(\rho_k^*) < \hat \rho_k  < 1.
\end{equation}
Finally, we set
$$
\delta_k^* := \inf_{\check{\rho}_k \leq \rho \leq \hat \rho_k} 
\frac{\varepsilon^{N-1} \hat f(x+1)x + (p-1)\vert y \vert^{p'}}{\vert x \vert^p + (p-1)\vert y \vert^{p'}}.
$$
We are now in a position to prove that, if $R_1 < \varepsilon R_2$ and
$$
R_2 > R_*(k,\varepsilon) := \frac{\pi_p k}{(1-\varepsilon)\delta_k^*},
$$
then our claim holds true, namely there exists $d_k \in (0,1)$ such that 
$$
\theta_{d_k}(R_2) > (k+1)\pi_p.
$$ 

We first observe that, since $\rho_1(r) = 0$ and $\rho_0(r) = 1$ for any $r \in [R_1,R_2]$, there exists $d_k \in (0,1)$ such that
$$
\rho_{d_k}(\varepsilon R_2) = \rho_k^*,
$$
reasoning as in Corollary \ref{cor:limit_rho_as_d_to0}.
We are now going to show that 
\begin{equation}\label{finalclaim}
\theta_{d_k}(R_2) - \theta_{d_k}(\varepsilon R_2) > k \pi_p,
\end{equation}
which concludes the proof since $\theta_{d_k}(R_1) = \pi_p$ and, by \eqref{sys4a}, $\theta_{d_k}$ is a non-decreasing function.
We distinguish two cases. If $\rho_{d_k}(r) \in [\check{\rho}_k,\hat \rho_k]$ for any $r \in [\varepsilon R_2, R_2]$, we easily conclude: indeed, by the expression of $\theta'$ in \eqref{sys4a}, the definition of $\delta_k^*$ and the choice of $R_2$,
$$  
\theta_{d_k}(R_2) - \theta_{d_k}(\varepsilon R_2) = \int_{\varepsilon R_2}^{R_2} \theta_{d_k}'(r)\,dr \geq R_2 (1-\varepsilon) 
\delta_k^* > k\pi_p.
$$
Otherwise, we let $\bar r \in [\varepsilon R_2, R_2)$ be the largest value such that $\rho_{d_k}(r) \in [\check{\rho}_k,\hat \rho_k]$ for any
$r \in [\varepsilon R_2, \bar{r}]$ and we prove in this case that
$$
\theta_{d_k}(\bar r) - \theta_{d_k}(\varepsilon R_2) > k\pi_p,
$$
implying \eqref{finalclaim} again in view of the monotonicity of $\theta_{d_k}$.

Suppose by contradiction that this is not true and, just to fix the ideas, that $\rho_{d_k}(\bar r) = \hat \rho_k$ (in the case $\rho_{d_k}(\bar r) = \check \rho_k$ the argument is analogous). Observe also that, again by the monotonicity of 
$\theta_{d_k}$, we have $\theta_{d_k}(r) - \theta_{d_k}(\varepsilon R_2) \leq k \pi_p$ for any $r \in [\varepsilon R_2,\bar r]$.
Now, we consider the function $\gamma(\theta) = \rho^+(\theta;\rho_k^*,\bar\theta)$, where $\bar\theta \in [0,2\pi_p)$ is such that
$\theta_{d_k}(\varepsilon R_2) \equiv \bar\theta \mod 2\pi_p$. By the definition of $M_k(\rho_k^*)$ and \eqref{choice}, it holds
$$
\gamma(\theta) < \hat \rho_k \quad \mbox{ for every } \theta \in [\bar\theta,\bar\theta + k\pi_p];
$$
moreover, from \eqref{msu} and \eqref{eqm} we obtain
$$
\mathrm{P}(r,\gamma(\theta),\theta) \leq \gamma'(\theta)\Theta(r,\gamma(\theta),\theta) \quad \mbox{ for every } r \in 
[\varepsilon R_2,\bar{r}],\, \theta \in [\bar\theta,\bar\theta + k \pi_p].
$$
Lemma \ref{lem:BosZan} then implies that 
$$
\rho_{d_k}(r) \leq \gamma(\theta_{d_k}(r)) \quad \mbox{ for every } r \in [\varepsilon R_2,\bar{r}],
$$
so that $\rho_{d_k}(\bar{r}) \leq M_k(\rho_k^*) < \hat \rho_k$, a contradiction.
\end{proof}

\section{Numerical simulations and open problems}\label{sec:simulations}
We present here some numerical simulations performed with the software AUTO-07p \cite{AUTO}. We consider problem \eqref{modello} in dimension $N=1$, more precisely
\begin{equation}\label{eq:modelloN1}
\begin{cases}
-(\varphi_p(u'))'+u^{p-1}=u^{q-1} \quad &\text{in } (0,1) \\
u>0 \quad &\text{in } (0,1) \\
u'(0)=u'(1)=0.
\end{cases}
\end{equation}

\begin{figure}[h!t]
\begin{center}
\includegraphics[width=\textwidth]{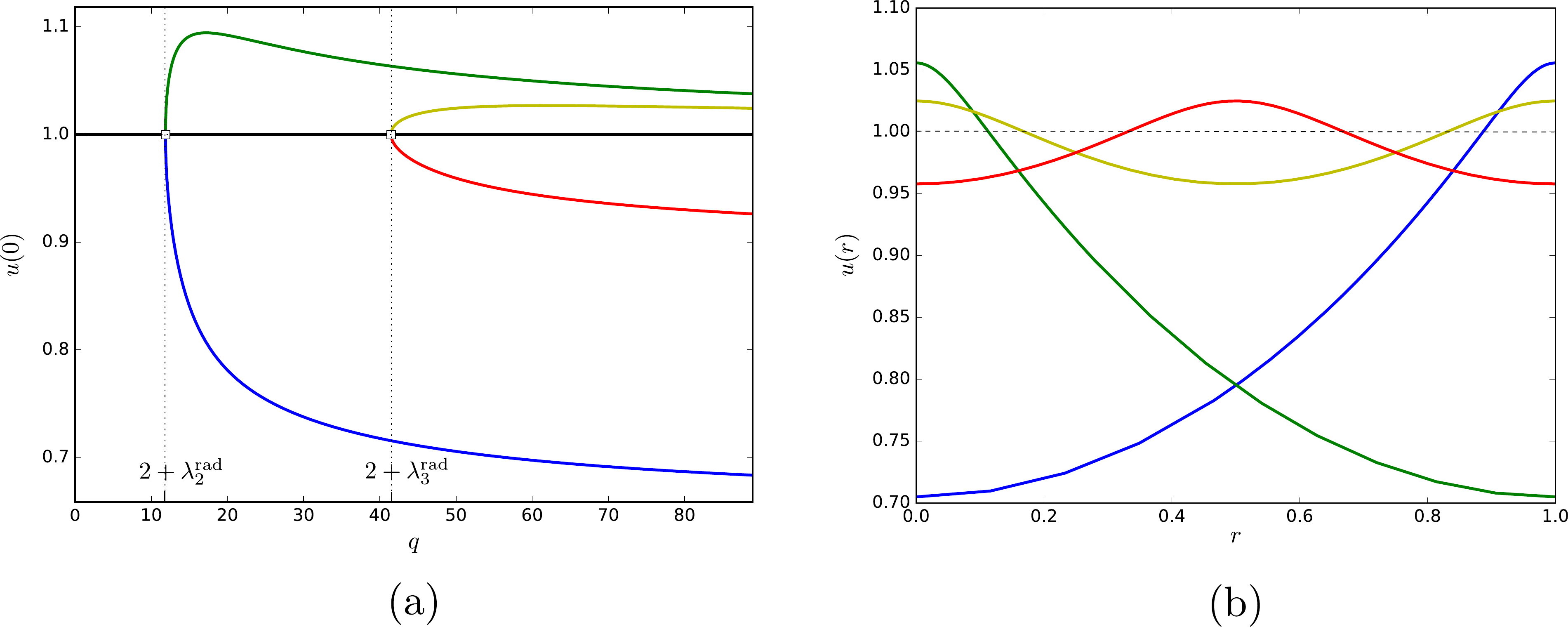}
\caption{The first two bifurcation branches for problem \eqref{eq:modelloN1} in the case $p=2$. The color of each solution in the right plot corresponds to the color of the branch it belongs to in the left plot. (a) Bifurcation diagram: $u(0)$ as function of $q$. (b) Solutions belonging to the first two branches for $q\simeq 50$.}\label{fig:p2}
\end{center}
\end{figure}

In Figure~\ref{fig:p2} we represent the first two bifurcation branches for problem \eqref{eq:modelloN1} in the case $p=2$. The black line represents the constant solution $u\equiv1$; the branches bifurcate at points $q=2+\lambda_k^{\text{rad}}$, $k=2,3$. The solutions belonging to the lower part of the first branch are monotone increasing, the ones belonging to the upper part of the first branch are monotone decreasing, in both cases they all intersect once the constant solution $u\equiv 1$. Solutions of the lower part of the second branch present exactly one  interior maximum point, solutions of the upper part of the second branch have exactly one interior minimum point, in both cases they have two intersections with $u\equiv 1$, and so on. The solutions that we have found in Corollary \ref{cor:modello}-(ii) belong to the lower parts of the branches, since they all satisfy $u(0)<1$. 
Much more general simulations for $p=2$ can be found in \cite{bonheure2016multiple}.
We remark that, as explained therein, the global behavior of the upper parts of the branches can be investigated when the nonlinearity is subcritical (in particular, for $N=1$) or when the problem is considered in an annular domain, while it appears as an open problem in the general setting. We believe that the shooting technique adopted in this paper
could also lead to results similar to the ones obtained in \cite{bonheure2016multiple}, providing 
(in the subcritical setting) multiple positive solutions with $u(0) > 1$. Notice, however, that here we do not obtain bifurcation continua, but just multiple solutions for a fixed value of $q$ (studying their behavior with respect to a parameter is possible in principle, but requires additional arguments from planar topology, see \cite{rebzan}).

\begin{figure}[h!t]
\begin{center}
\includegraphics[width=\textwidth]{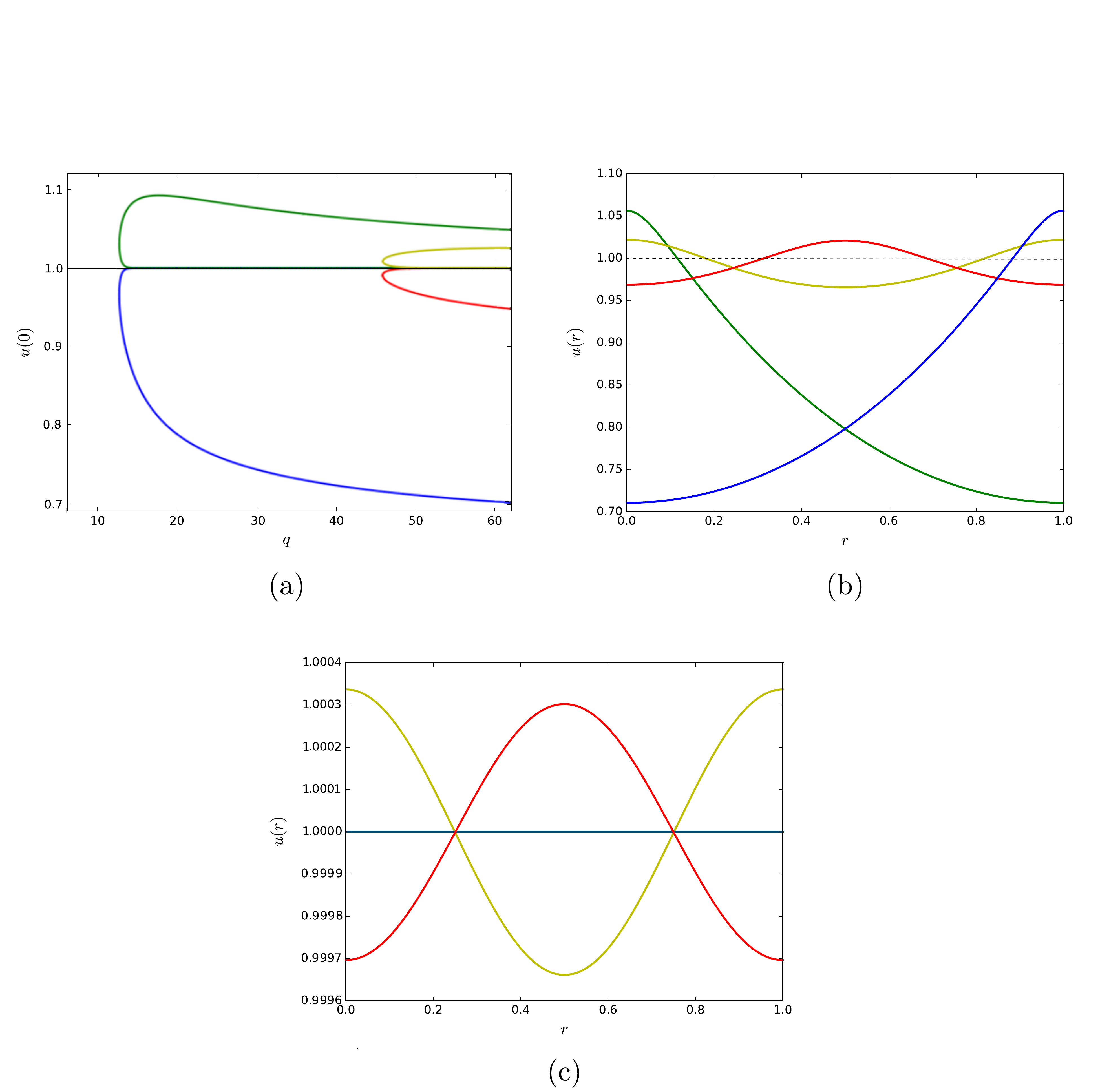}
\caption{The first four ``bifurcation'' branches for problem \eqref{eq:modelloN1} in the case $p=1.97$. The color of each solution in the last two plots  corresponds to the color of the branch it belongs to in the first plot. (a) Bifurcation diagram: $u(0)$ as function of $q$. Notice that the new
folded parts of the branches appear in the figure almost completely
overlapped with the branch of the constant solution. (b) ``Large'' solutions belonging to the first four branches for $q\simeq 50$. (c) ``Almost constant'' solutions belonging to the first four branches, $q\simeq 50$; again, note that the green and the blue solutions appear almost completely overlapped.}\label{fig:pi2}
\end{center}
\end{figure}

In the case $p<2$, the branches persist for $p$ sufficiently close to $2$, but now both the upper and the lower part of each branch split into two.  
In Figure~\ref{fig:pi2} we represent this phenomenon for $p=1.97$. Now we have four branches. According to the simulations, none of them seems to bifurcate from the constant solution: each branch seems to be unbounded on both sides, and one side seems to converge to the constant solution $u\equiv1$ as $q\to+\infty$, as if the bifurcation point had escaped to infinity. For this reason, each branch contains two solutions having the same oscillatory behavior, thus giving rise to the double of solutions with respect to the case $p=2$.
Once again, the solutions that we have found in Corollary \ref{cor:modello}-(iii) belong to the lower branches, since they all satisfy $u(0)<1$. The existence of solutions satisfying $u(0)>1$ for $p<2$ is for the moment an open problem. Similarly as in the case $p=2$, we conjecture that such solutions should exist when $f$ has Sobolev-subcritical growth, or when the domain is an annulus, thus giving rise, in the assumptions of Theorem \ref{th:main2}, to $4k$ radial solutions.

\begin{figure}[h!t]
\begin{center}
\includegraphics[height=4.6cm]{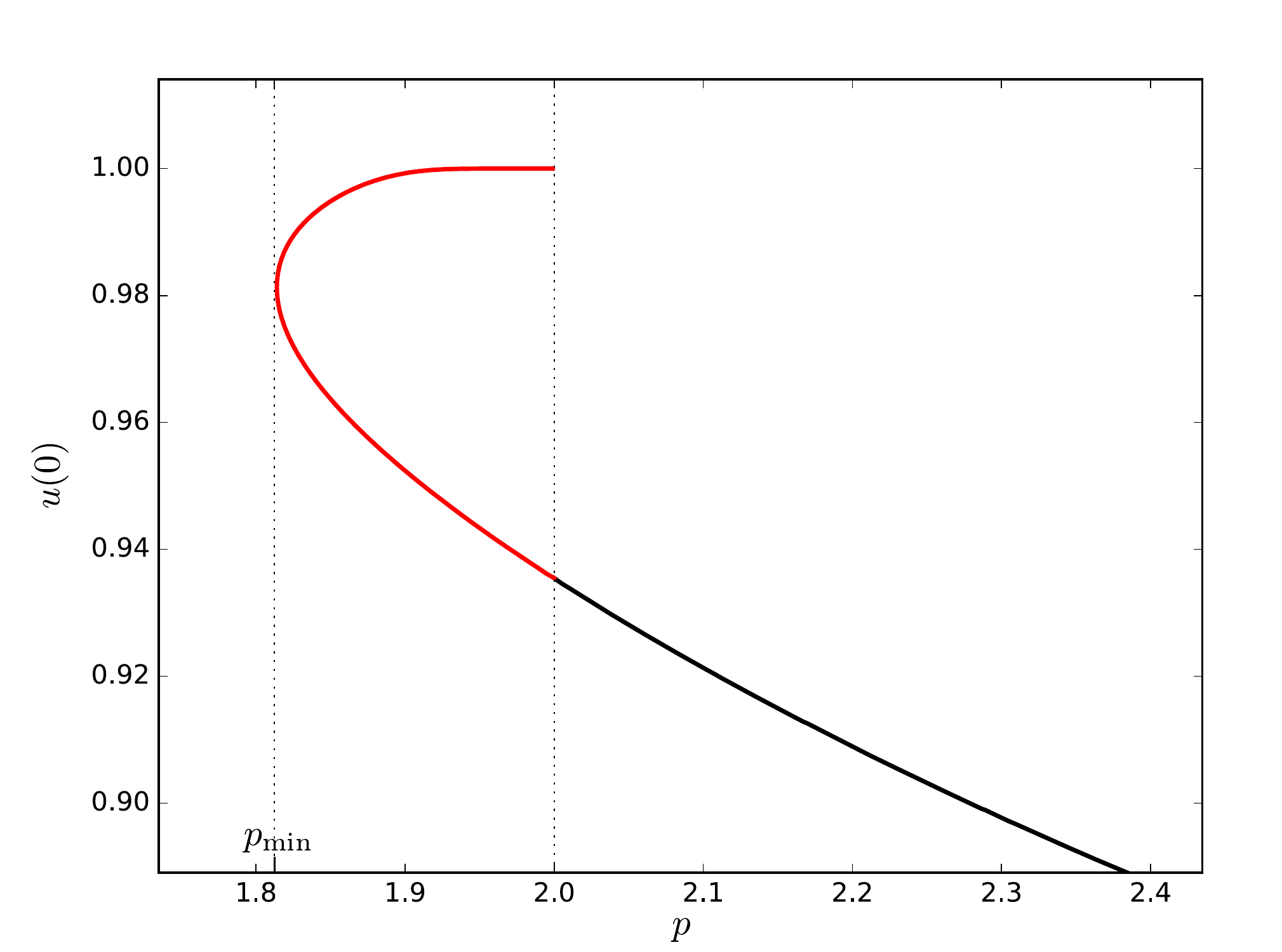} 
\caption{Bifurcation in the parameter $p$, starting from $p=2$ for $q=\bar q$ fixed. In particular, for $p < 2$, a branch of solutions (in red) is obtained; this folded branch persists for $p \geq p_{\text{min}}$, giving rise to a couple of solutions for every $p\in (p_{\text{min}}, 2)$. }\label{fig:bifp}
\end{center}
\end{figure}

\begin{figure}[h!t]
\begin{center}
\includegraphics[width=\textwidth]{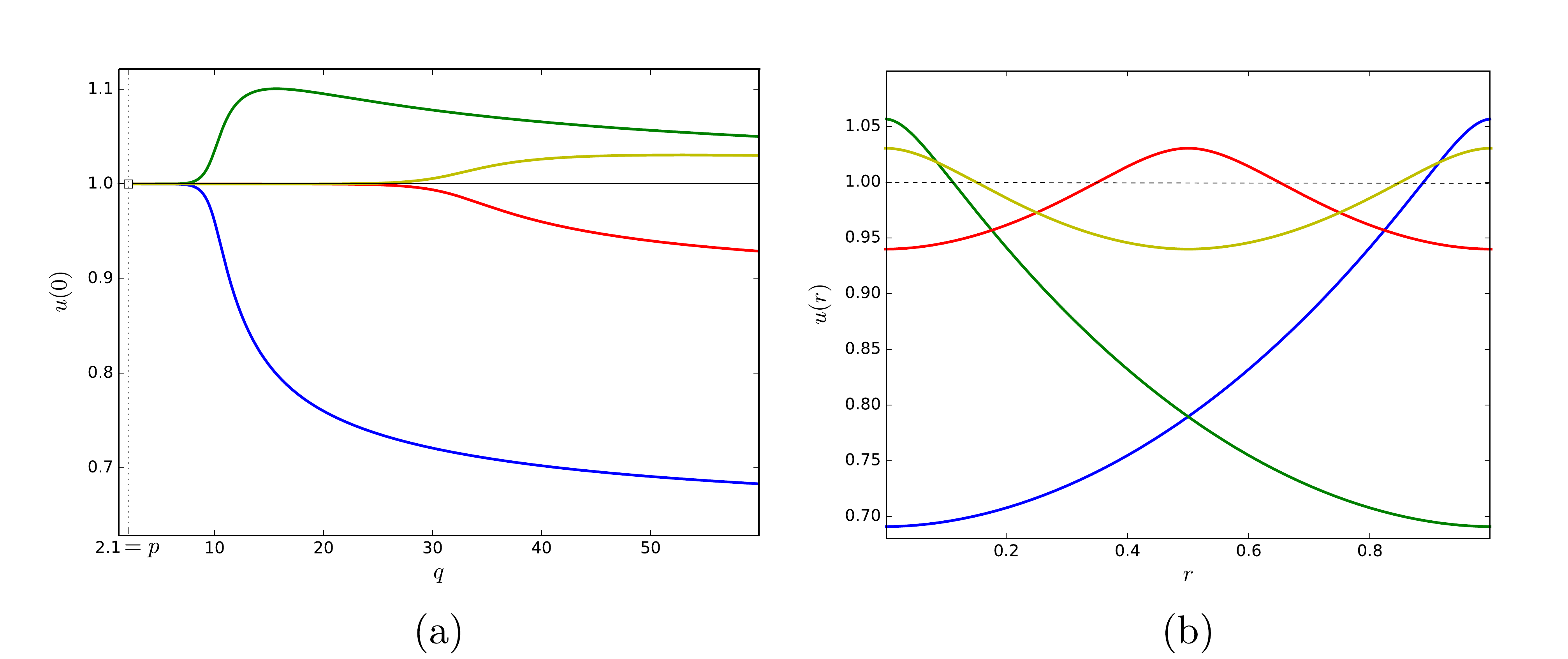}
\caption{Partial bifurcation diagram for problem \eqref{eq:modelloN1} with $p=2.1$: the first branches of solutions bifurcating at $q=p$. The color of each solution in the right plot corresponds to the color of the branch it belongs to in the left plot. (a) Bifurcation diagram: $u(0)$ as function of $q$. (b) Solutions belonging to the first branches for $q\simeq 50$.}
\label{fig:ps2}
\end{center}
\end{figure}

We wish to mention that the solutions presented in Figure~\ref{fig:pi2} could not be detected by bifurcating from the constant. Instead, we adopted the following technique. We started from a solution of the problem with $p=2$ and $q$ equal to a certain value $\bar q$. We considered bifurcation for this solution in the parameter $p$ (with $q=\bar q$ fixed). This provides a continuum of solutions up to a certain minimum value $p_{\text{min}}$, as shown for example in Figure~\ref{fig:bifp}. The two solutions obtained in this way for a certain $\bar p$ satisfying $p_{\text{min}}\leq \bar p<2$ can be used as a starting point to obtain the graph in Figure~\ref{fig:pi2} (with $p=\bar p$ fixed and $q$ variable). According to this discussion, it seems to be an interesting question whether the multiplicity scheme of Theorem \ref{th:main2} could be obtained when varying $p$ instead of the diameter of the domain, that is if, given a domain and given $k\geq1$, it is possible to obtain $2k$ solutions for any $p\in(p_{\textrm{min}}(k),2)$.

Finally, when $p>2$, there seems to persist a phenomenon of bifurcation from the constant solution.
We conjecture that in this case infinite branches bifurcate from the same point $q=p$, giving rise to a very degenerate situation. Notice that this would be coherent with the result of Corollary \ref{cor:modello}-(i). In Figure~\ref{fig:ps2} we present the first bifurcation branches for $p=2.1$. Some numerical difficulties occur also in this case, probably due to the fact that an infinite number of curves meet at $q=p$ and that these curves start with an almost flat shape. In order to detect the blue and green branches, we took advantage of the monotonicity of the solutions belonging to them; to find the other two branches we adopted the method described above for the case $p<2$.
Also in this case, the existence of solutions satisfying $u(0)>1$ is an open problem.

\section*{Acknowledgments}
\noindent A. Boscaggin acknowledges the support of the projects MIS F.4508.14 (FNRS) \& ARC AUWB-2012-12/17-ULB1- IAPAS for his visits at Universit\'e Libre de Bruxelles, where parts of this work have been achieved.
A. Boscaggin and F. Colasuonno were partially supported by the INdAM - GNAMPA Projects 2016 ``Problemi differenziali non lineari: esistenza, molteplicità e
proprietà qualitative delle soluzioni'' and ``Fenomeni non-locali: teoria, metodi e applicazioni", respectively. Furthermore, A. Boscaggin and B. Noris were also supported by the project ERC Advanced Grant 2013 n. 339958: ``Complex Patterns for Strongly Interacting Dynamical Systems --
COMPAT''. B. Noris wishes to thank A. Salda\~na and G. Petretto for their help with AUTO07p and  Python respectively.

%\bibliographystyle{abbrv}
%\bibliography{biblio}

\def\cprime{$'$}

\end{document}